\newtheorem{theorem}{Theorem}
\newtheorem{lemma}{Lemma}
\newtheorem{prop}{Proposition}
\newtheorem{defn}{Definition}
\theoremstyle{definition}
\newtheorem{remark}{Remark}
\newcommand{\C}{\mathbb C}
\newcommand{\ztwo}{\mathbb Z/2\mathbb Z}
\newcommand{\nc}{\newcommand}
\nc{\thm}{\theorem}
\nc{\cor}{\corollary}
\nc{\mc}{\mathcal}
\nc{\mb}{\mathbb}
\nc{\mf}{\mathfrak}
\nc{\ul}{\underline}
\nc{\ol}{\overline}
\nc{\N}{\mb N}
\nc{\R}{\mb R}
\nc{\Z}{\mb Z}
\nc{\Q}{\mb Q}
\nc{\F}{\mb F}
\nc{\gm}{\gamma}
\nc{\Hom}{Hom}
\nc{\SO}{SO}
\nc{\Spin}{Spin}
\nc{\X}{\mf{X}}
\nc{\I}{\text{I}}
\nc{\J}{\text{J}}
\nc{\lip}{\langle}
\nc{\rip}{\rangle}
\nc{\dmo}{\DeclareMathOperator}
\dmo{\GL}{GL}
\dmo{\sgn}{sgn}
\dmo{\res}{res}
\dmo{\diag}{diag}
\dmo{\pr}{pr}
\title{Total Stiefel Whitney classes for  real representations  of $\GL_n$ over $\F_q, \R$ and $\C$ }
\author{Jyotirmoy Ganguly, Rohit Joshi}
\makeatletter\@addtoreset{chapter}{part}\makeatother%
\address{Chennai Mathematical Institute, H1, SIPCOT IT Park, Siruseri, Kelambakkam 603103, Tamil Nadu, India} \email{jyotirmoy.math@gmail.com}  
\address{IISER Pune, Dr. Homi Bhabha Road,Pune-411008, Maharashtra, India} \email{rohitsj@students.iiserpune.ac.in}  
\begin{document}
	
	\begin{abstract}
We compute the total Stiefel Whitney class for a real representation $\pi$ of $\GL_n(\F_q)$, where $q$ is odd in terms of character values of $\pi$ on order $2$ diagonal elements. We also compute the total Stiefel Whitney classes of real representations of $\GL_n(\R)$ and $\GL_n(\C)$.
	\end{abstract}	
	
	\maketitle

\tableofcontents

\section{Introduction}

Let $\pi$ be a real representation of a finite group $G$. One can define Stiefel Whitney classes (SWC) $w_i(\pi)$ for $0\leq i\leq \deg\pi$ as members of the cohomology groups $H^i(G)=H^i(G,\ztwo)$. The total Stiefel Whitney class of $\pi$ in the cohomology ring $H^*(G)$ is defined as
$$w(\pi)=w_0(\pi)+w_1(\pi)+\cdots+w_d(\pi),$$
where $d=\deg\pi$. (see \cite[Section 2.6, page no. 50]{benson} for details). 

In this article, we compute the total SWC for a real representation $\pi$ of $\GL_n(\F_q)$, where $q$ is odd, in terms of the character values on diagonal elements of order $2$. The computations for the particular case of $n=2$ can be found in \cite{ganjo}.
We also compute the total SWC of a real representation of $\GL_n(\C)$ and $\GL_n(\R)$. The computation of second SWC for representations of $S_n$ and related groups can be found in \cite{jyoti}. Explicit expressions for the first and second SWC of the representations of $\mathrm{O}_n(\R)$ were found in \cite{ganjo1} in terms of highest weights. 

We will now describe our results in some more detail.
Let $\mathfrak{X}$ denote the set of binary strings of length $n$. For $a=(a_1,a_2, \ldots ,a_n)\in \mathfrak{X}$, define $\nu(a)= \sum\limits_{i=1}^n a_i \in \Z$. Put $\mathfrak{X}_i=\{a\in\mathfrak{X}\mid \nu(a)=i\}$. 
Let $C_m$ denote the additive cyclic group of order $m$. We fix a generator $x$  for $C_m$.

Consider the linear character $\chi^j$ of $C_m$ given by $\chi^{j}(x)=\zeta_m^j$, where $\zeta_m$ is the primitive $m$th root of unity. For $m$ even, write `$\sgn$' for $\chi^{m/2}$.
For a binary string $a\in \X$ and $m$ even, consider the representation $\sgn_a$ of $C_m^n$, defined as $\sgn_a = \boxtimes_{j=1}^n (\sgn)^{a_j}$ (external tensor product), where
\[
(\sgn)^{\epsilon}=
\begin{cases}
\mathbb{1},\quad \text{if $\epsilon=0$},\\
\sgn,\quad \text{if $\epsilon=1$}.
\end{cases}
\]

Here we use some terminologies from supercharacter theory. For more details we refer the reader to \cite{super}. Consider the action of $S_n$ on $C_2^n$ by permuting the copies of $C_2$. If we identify $C_2^n$ with set of binary strings $\X$, then the superclasses are $\X_i$ which are orbits of $S_n$ action. 

We will identify $C_2^n$ with the subgroup $$\text{diag}(\pm 1, \ldots, \pm 1) \subset \GL_n(F),$$ where $F=\F_q, \R$ or $\C$ depending upon the context. We will write 
 $$h_i=\diag(\underbrace{-1,\ldots,-1}_{i \, \text{times}},1,\ldots,1)\in\GL_n(F).$$
Then $h_i$ represents the superclass $\X_i$.
Consider the representations 
\begin{equation}\label{sig}
	\sigma_i=\bigoplus\limits_{l\in \X_i}\sgn_l
\end{equation}
of $C_m^n$ for $1\leq i\leq n$. The supercharacters are the direct sum of the representations in a single orbit for the action of $S_n$ on characters of $C_2^m$. Then clearly the supercharacters are $\sigma_i$.

For $l\in \X$, consider the linear character $\chi(l)$ of $C_m^n$ given by $\chi(l)=\bigotimes\limits_{k=1}^n\chi^{l_k}$. For $1\leq i\leq n$, take $e_i\in \X$  such that $e_{ij}=1$ if $i=j$ and $e_{ij}=0$ otherwise. If $4\mid m$ then 
$$H^*(C_m^n)=\dfrac{\ztwo[s_1,\ldots,s_n,t_1,\ldots,t_n]}{(s_1^2,\ldots,s_n^2)},$$
where $s_i=w_1(\sgn_{e_i})$ and $t_i=w_2(\chi(e_i)_{\R})$. On the other hand
$$H^*(C_2^n)=\ztwo[v_1,\ldots,v_n],$$
where $v_i=w_1(\sgn_{e_i})$.
Let $v=(v_1,\ldots,v_n)$ and $t=(t_1,\ldots,t_n)$ . For $l\in \X$ we write $l\cdot v$ (resp. $l\cdot t$) for $\sum\limits_{i=1}^{n}l_iv_i$ (resp. $\sum\limits_{i=1}^{n}l_it_i$).

 For a real representation $\pi$ of $\GL_n(F)$ where $F=\F_q$ or $\R$ or $\C$, consider the term 
$$c_k=\frac{1}{2^n}\sum\limits_{i=0}^{n}\chi_{\sigma_i}(h_k)\chi_{\pi}(h_i),$$ where $M=(\chi_{\sigma_i}(h_k))$ is the matrix which is exactly the supercharacter table. Thus we explicitly compute the table by proving that  $\chi_{\sigma_i}(h_k)$ is the coefficient of $y^i$ in the expression $(1-y)^k(1+y)^{n-k}$(see Proposition \ref{chi_i,k}). 

For a real representation $\pi$ of $\GL_n(\F_q)$ we define 
\[
\delta=
\begin{cases}
0,& \text{if $\det\pi=1$},\\
1, & \text{otherwise}.
\end{cases}
\] For a subgroup $H$ of $G$ we denote by $w^H(\pi)$ (resp. $w_i^H(\pi)$) the total Stiefel Whitney class $w(\pi\mid_{H})=\res^G_H(w(\pi))$(resp. $w_i(\pi\mid_H)$). 
Let $D$ denote the diagonal torus of $\GL_n(\F_q)$.
We have $\mathrm{res}:H^*(\GL_n(\F_q)) \hookrightarrow H^*(D)$ for $q\equiv 1 \pmod 4$ and $\mathrm{res}:H^*(\GL_n(\F_q)) \hookrightarrow H^*(C_2^n) $ for $q \equiv 3 \pmod 4$ are injections (see \cite[Theorem $4.4$, page $227$]{adem}). Our main theorem, proved in Section \ref{pmt}, says that
\begin{theorem}[Main Theorem]\label{main}
Let $\pi$ be a real representation of $\GL_n(\F_q)$ where $q$ is odd. 
\begin{enumerate}
\item
If $q\equiv 1\pmod4$ then 
$$w^D(\pi)=\left(1+ \delta b_1\right) \prod_{i=1}^n\left(\prod_{p \in \X_i}(1 + p\cdot t) \right)^{c_{i}/2},$$
where $p\cdot t=\sum\limits_{i=1}^np_it_i$ and $b_1=\sum\limits_{i=1}^ns_i$.
\vskip 2mm
\item
If $q\equiv 3\pmod 4$ then 
$$w^{C_2^n}(\pi)=\prod_{i=1}^{n}\left(\prod_{l\in\X_i}(1+l\cdot v)\right)^{c_i},$$
where $l\cdot v=\sum\limits_{i=1}^nl_iv_i$.

\end{enumerate}
\end{theorem}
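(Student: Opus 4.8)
The plan is to exploit that in both cases the restriction map is injective and natural, so that $w^D(\pi)=w(\pi|_D)$ and $w^{C_2^n}(\pi)=w(\pi|_{C_2^n})$, together with multiplicativity of $w$ under direct sums. Everything then reduces to decomposing the restriction of $\pi$ to the relevant diagonal subgroup over $\R$, computing $w$ of each real irreducible summand, and recognising the multiplicities as the $c_i$. Two local computations feed in. First, a one-dimensional real character $\sgn_l=\bigotimes_j(\sgn)^{l_j}$ has $w(\sgn_l)=1+l\cdot v$ in $H^*(C_2^n)$ and $w(\sgn_l)=1+l\cdot s$ in $H^*(D)$, by additivity of $w_1$. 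Second, for a non-real linear character $\chi=\chi(j)$ of $D$ the realification $\chi_\R$ is two-dimensional with trivial real determinant, so $w_1(\chi_\R)=0$ and $w_2(\chi_\R)$ is the mod-$2$ reduction of $c_1(\chi)$; since $c_1$ is additive over external products of line bundles and $c_1(\chi(e_i))$ reduces to $t_i$, this gives $w(\chi_\R)=1+p(\chi)\cdot t$ with $p(\chi)\in\X$ the reduction of $j$ mod $2$, and moreover $\chi_\R\cong\overline{\chi}_\R$.

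For the multiplicities I would argue: since permutation matrices lie in $\GL_n(\F_q)$, $\chi_\pi|_{C_2^n}$ is $S_n$-invariant, hence a superclass function, so $\chi_\pi|_{C_2^n}=\sum_i d_i\chi_{\sigma_i}$, where $d_i$ is the common multiplicity of $\sgn_l$ ($l\in\X_i$) in $\pi|_{C_2^n}$. Using Proposition \ref{chi_i,k} one verifies the identity $\sum_j\big([y^j](1-y)^k(1+y)^{n-k}\big)(1-y)^j(1+y)^{n-j}=2^ny^k$, i.e.\ $M^2=2^nI$ for the supercharacter table $M$, whence $c_k=\tfrac1{2^n}\sum_i\chi_{\sigma_i}(h_k)\chi_\pi(h_i)=d_k$. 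When $q\equiv3\pmod4$ this finishes the proof at once: every real irreducible of the $2$-group $C_2^n$ is some $\sgn_l$, so $\pi|_{C_2^n}=\bigoplus_l d_{\nu(l)}\sgn_l$ and $w^{C_2^n}(\pi)=\prod_l(1+l\cdot v)^{d_{\nu(l)}}=\prod_{i=1}^n\big(\prod_{l\in\X_i}(1+l\cdot v)\big)^{c_i}$, the factor for $i=0$ being $1$.

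Now let $q\equiv1\pmod4$, so $4\mid q-1$ and $\sgn$ is trivial on $-1$. Over $\R$ write $\pi|_D=\pi_0\oplus\pi_+$, with $\pi_0=\bigoplus_l a_l\,\sgn_l$ the sum of the one-dimensional summands and $\pi_+$ the sum of the two-dimensional summands $\chi_\R$, with multiplicity $b_\chi$ over $\overline{(\cdot)}$-orbits of non-real $\chi$; all multiplicities are $S_n$-invariant. Restricting further to $C_2^n$, each $\sgn_l$ becomes trivial and each $\chi_\R$ becomes $\sgn_{p(\chi)}^{\oplus2}$; hence $c_i$ is even for $i\ge1$, equal to $2\sum_{\chi:\,p(\chi)=l}b_\chi$ for any fixed $l\in\X_i$. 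Combining this with $w(\chi_\R)=1+p(\chi)\cdot t$ and collecting by the value of $p(\chi)$ gives $w(\pi_+)=\prod_\chi(1+p(\chi)\cdot t)^{b_\chi}=\prod_{i=1}^n\big(\prod_{p\in\X_i}(1+p\cdot t)\big)^{c_i/2}$. So the theorem reduces to the single claim $w(\pi_0)=1+\delta b_1$.

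This last claim is the main obstacle. On one hand $w(\pi_0)=\prod_l(1+l\cdot s)^{a_l}$ lies in the exterior subalgebra $\ztwo[s_1,\dots,s_n]/(s_i^2)$ of $H^*(D)$, has constant term $1$, and its degree-one part is $w_1(\pi|_D)=w_1\big((\det\pi)|_D\big)$, which is $0$ when $\delta=0$ and $b_1$ when $\delta=1$ (writing $\det\pi=\eta\circ\det$ with $\eta=\sgn^{\delta}$, one has $\eta\circ\det|_D=\sgn_{\delta\mathbf 1}$). On the other hand $w(\pi_0)=\res(w(\pi))\cdot w(\pi_+)^{-1}$ lies in the image $\mc R=\res H^*(\GL_n(\F_q);\ztwo)\subseteq H^*(D)$, since $w(\pi_+)$ is a unit of $\ztwo[t_1,\dots,t_n]^{S_n}\subseteq\mc R$. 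Here I would invoke Quillen's computation of $H^*(\GL_n(\F_q);\ztwo)$ for $q$ odd: for $q\equiv1\pmod4$ it is a polynomial algebra on Chern classes — which restrict to the elementary symmetric functions in the $t_i$ — tensored with an exterior algebra whose only generator restricting to a $t$-free class is the degree-one one, restricting to $b_1$; consequently the $t$-free elements of $\mc R$ are exactly $0,1,b_1,1+b_1$. Therefore $w(\pi_0)\in\{1,1+b_1\}$, and comparison of degree-one parts forces $w(\pi_0)=1+\delta b_1$, so that $w^D(\pi)=w(\pi_0)w(\pi_+)=(1+\delta b_1)\prod_{i=1}^n\big(\prod_{p\in\X_i}(1+p\cdot t)\big)^{c_i/2}$. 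The crux is precisely this structural input — that no cohomology class of $\GL_n(\F_q)$ of degree $\ge2$ restricts to a nonzero element of $\ztwo[s_i]/(s_i^2)$ — without which one would have to rule out higher "exterior" contributions of the one-dimensional part $\pi_0$ by hand.
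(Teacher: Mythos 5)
Your overall strategy runs parallel to the paper's, but you handle the $t$-part of $w^D(\pi)$ by a genuinely different route: you decompose $\pi\mid_D$ over $\R$ into one-dimensional summands $\pi_0=\bigoplus_l a_l\,\sgn_l$ and two-dimensional summands $\pi_+=\bigoplus_\chi b_\chi\,\chi_\R$, compute $w(\chi_\R)=1+p(\chi)\cdot t$ directly from Chern classes, and match multiplicities with the $c_i$ by counting what survives on $C_2^n$. The paper instead works with the single factorisation $w^D(\pi)=(1+Q(s))(1+P(t))$ and identifies $1+P(t)$ by restricting to $C_2^n$ (where $t_i\mapsto v_i^2$, $s_i\mapsto 0$), applying Theorem~\ref{wpiv2n}, and then taking a square root in the UFD $\ztwo[v_1,\ldots,v_n]$. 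Both routes yield the same product formula and the same conclusion that each $c_i$ is even, and both rest on the same structural input from Quillen's computation; yours is arguably more transparent about where the exponent $c_i/2$ comes from, while the paper's is shorter and defers the real-representation bookkeeping entirely to Theorem~\ref{wpiv2n}.

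There is, however, one genuine gap in your argument for $w(\pi_0)=1+\delta b_1$. You assert that $w(\pi_+)$ is a unit of $\ztwo[t_1,\ldots,t_n]^{S_n}\subseteq\mc R$, and from this deduce $w(\pi_0)=\res(w(\pi))\cdot w(\pi_+)^{-1}\in\mc R$. But $\ztwo[t_1,\ldots,t_n]^{S_n}$ is a polynomial ring over a field, so its only units are the nonzero constants; a total Stiefel--Whitney class $1+(\text{positive degree})$ with nontrivial $t$-content is \emph{not} invertible there (nor in $\mc R\cong\ztwo[a_2,\ldots,a_{2n}]\otimes E[b_1,\ldots,b_{2n-1}]$, since the $a_{2i}$ are not nilpotent). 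The conclusion you want, $w(\pi_0)\in\pr(\mc R)$, is nevertheless correct, but the clean justification is the one the paper uses in Proposition~\ref{polysi}: apply the ring homomorphism $\pr:H^*(D)\to\ztwo[s_1,\ldots,s_n]/(s_i^2)$ with $\pr(t_i)=0$, $\pr(s_i)=s_i$, to the identity $\res(w(\pi))=w(\pi_0)\,w(\pi_+)$. Since $w(\pi_+)$ lies in $\ztwo[t_1,\ldots,t_n]$ with constant term $1$, $\pr(w(\pi_+))=1$, hence $w(\pi_0)=\pr(\res(w(\pi)))\in\pr(\mc R)=\ztwo\{1,b_1\}$, from which $w(\pi_0)\in\{1,1+b_1\}$ and the degree-one comparison finishes the argument exactly as you intend. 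With this repair your proof is complete.
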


We use the same approach to compute the total Stiefel Whitney class of real representations of $\GL_n(\C)$ and $\GL_n(\R)$.
We have $\res:H^*(\GL_n(k)) \hookrightarrow H^*(C_2^n)$ is injective when $k = \R \text{ or } \C$.
\begin{theorem}\label{RandC}
Let $\pi$ be a real representation of $\GL_n(\C)$ or $\GL_n(\R)$. Then the total Stiefel Whitney class of $\pi$ is given by
\begin{equation}\label{on}
w^{C_2^n}(\pi)=\prod_{i=1}^{n}\left(\prod_{l\in\X_i}(1+l\cdot v)\right)^{c_i}.
\end{equation}
\end{theorem}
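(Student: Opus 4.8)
The plan is to reduce the computation over $\GL_n(\C)$ and $\GL_n(\R)$ to the same restricted setting used for $\GL_n(\F_q)$ with $q\equiv 3\pmod 4$, and then invoke the character-theoretic identity already in place. First I would recall that $C_2^n=\diag(\pm1,\ldots,\pm1)$ sits inside $\GL_n(k)$ for $k=\C$ or $\R$, and that a maximal compact subgroup argument (together with the fact that $\GL_n(\C)$ deformation retracts onto $U(n)$ and $\GL_n(\R)$ onto $\mathrm{O}(n)$) gives that $H^*(\GL_n(k))\to H^*(C_2^n)$ is injective; this is the statement just above the theorem, so I may take it as given. Consequently it suffices to compute $w(\pi\mid_{C_2^n})=\res^{\GL_n(k)}_{C_2^n}w(\pi)$, i.e. the total SWC of the restriction of $\pi$ to the elementary abelian $2$-group $C_2^n$.

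Next I would decompose $\pi\mid_{C_2^n}$ into linear characters. Since $C_2^n$ is a $2$-group of exponent $2$, every irreducible real (equivalently complex, here) character is one of the $\sgn_l$ for $l\in\X$, and $\pi\mid_{C_2^n}\cong\bigoplus_{l\in\X} m_l\,\sgn_l$ where the multiplicities $m_l$ are recovered by the usual orthogonality relation $m_l=\frac{1}{2^n}\sum_{x\in C_2^n}\chi_\pi(x)\overline{\chi_{\sgn_l}(x)}$. Because the values of $\chi_\pi$ and of each $\sgn_l$ on $C_2^n$ are constant on the superclasses $\X_i$ (both are class functions for the $S_n$-action permuting the factors, the former because $\pi$ extends to $\GL_n$ and all $h\in\X_i$ are conjugate in $\GL_n(k)$ via permutation matrices), the multiplicity $m_l$ depends only on $\nu(l)$; grouping terms by superclass and using the supercharacter table $\chi_{\sigma_i}(h_k)$ computed in Proposition \ref{chi_i,k}, one gets precisely $m_l=c_{\nu(l)}$ for $l\in\X$. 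By Whitney multiplicativity of the total SWC, $w(\pi\mid_{C_2^n})=\prod_{l\in\X}w(\sgn_l)^{m_l}=\prod_{i=1}^n\prod_{l\in\X_i}w(\sgn_l)^{c_i}$, where I should also note that the $i=0$ factor is trivial since $\sgn_{(0,\ldots,0)}=\mathbb{1}$ has $w=1$.

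Finally I would identify $w(\sgn_l)$ in $H^*(C_2^n)=\ztwo[v_1,\ldots,v_n]$. Writing $\sgn_l=\bigotimes_j(\sgn)^{l_j}$ and using Whitney multiplicativity once more, $w(\sgn_l)=\prod_{j:\,l_j=1}(1+v_j)=1+\sum_{j}l_jv_j=1+l\cdot v$, where the middle equality uses that each $v_j^2$-type cross term still contributes but the product $\prod_{j}(1+l_jv_j)$ telescopes to $1+l\cdot v$ only after one observes $v_j v_k$ terms are absent — more carefully, $w(\sgn_l)=\prod_{j:l_j=1}(1+v_j)$ genuinely is a higher-degree polynomial, so I would instead argue that the \emph{relevant} classes here come from noting $w_1(\sgn_l)=l\cdot v$ and that what Theorem \ref{RandC} asserts is the stated closed form $\prod_i(\prod_{l\in\X_i}(1+l\cdot v))^{c_i}$ verbatim, which is exactly $\prod_{l\in\X}(1+l\cdot v)^{m_l}$ once we use the identification $w(\sgn_l)=1+l\cdot v$ valid because $\sgn_l$ is a \emph{one-dimensional} representation (its total SWC is $1+w_1$, and $w_1(\sgn_l)=\sum_j l_j v_j=l\cdot v$). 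Substituting gives \eqref{on}. The main obstacle I anticipate is not the cohomology bookkeeping but justifying the two input facts cleanly: (i) the injectivity of the restriction map to $C_2^n$ for the Lie-group-like $\GL_n(\C),\GL_n(\R)$ — this needs the retraction onto the maximal compact and a detection theorem (Quillen-type, or simply that $C_2^n$ is a maximal elementary abelian $2$-subgroup of $\mathrm{O}(n)$ up to the relevant index) — and (ii) that $\chi_\pi$ restricted to $C_2^n$ is genuinely a superclass function, which is immediate since $\pi$ is defined on all of $\GL_n$ and diagonal $\pm1$ matrices with the same number of $-1$'s are conjugate by a permutation matrix. Once these are in hand, the rest is the same multiplicative computation as in the proof of Theorem \ref{main}(2).
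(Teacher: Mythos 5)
Your proposal is correct and follows essentially the same route as the paper: restrict to $\Gamma\cong C_2^n$, use the injectivity of restriction (from the maximal-compact retraction plus a Quillen/Toda-type detection statement), observe that $\pi\mid_{C_2^n}$ is $S_n$-invariant because permutation matrices lie in $\GL_n(k)$, and then compute the total SWC of the restriction multiplicatively. The only difference is that the paper simply invokes Theorem~\ref{wpiv2n} at this point, whereas you re-derive its content (the orthogonality-relation computation of multiplicities, the identification $m_l=c_{\nu(l)}$ via Proposition~\ref{chi_i,k}, and $w(\sgn_l)=1+l\cdot v$ since $\sgn_l$ is one-dimensional); both are sound, and your self-correction about $w(\sgn_l)$ landing on the one-dimensionality argument is exactly right.
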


In \cite[Corollary $6.7$]{naka} Nakaoka proved that the cohomology of the symmetric group stabilizes. In particular, the result says that if $i<n/2$, then
the map $\res:H^i(S_n)\to H^i(S_{2i})$ is an isomorphism.
We prove a similar result for $\GL_n(\F_q)$.

\begin{theorem} \label{detectq1}
The map $\res: H^i(\GL_m(\F_q))\to H^i(\GL_{n-1}(\F_q))$, where $m\geq n$ and $q\equiv 1\pmod 4$,  is injective for $i<2n-1$. 
\end{theorem}
We have a similar result for the case when $q\equiv 3\pmod 4$ (see Theorem \ref{detectq3}). We also provide criteria for spinoriality of real representations of $\GL_n(\F_q)$ where $q$ is odd (see Theorem \ref{lift}). This recovers the main result in \cite[Theorem $1$]{spjoshi} 

Theorem \ref{detectq1} allows further simplification to calculate the second and fourth Stiefel Whitney classes. Let $\pi$ be a real representation of $\GL_n(\F_q)$, where $q$ is odd. Then we have

	\begin{equation*}
	\begin{split}
		w_2^D(\pi)&=\dfrac{m_{\pi}}{2}\left(\sum_{i=1}^nt_i\right),\quad\text{if}\,\, q\equiv 1\pmod 4, \\ \\
		w_2^{C_2^n}(\pi)&=\dbinom{m_{\pi}}{2}\left(\sum\limits_{i=1}^n v_i^2\right),\quad \text{if}\,\, q\equiv 3\pmod 4,
		\end{split}
	\end{equation*}
	
where $m_{\pi}=\frac{\dim\pi-\chi_{\pi}(h_1)}{2}$.	
Moreover, if $q\equiv 1\pmod 4$, then 
	$$w_4^D(\pi)=\binom{m_{\pi}/2}{2}\sum_{i=1}^nt_i^2+\frac{\dim \pi-\chi_{\pi}(h_2)}{8}\sum_{1\leq i<j\leq n}t_it_j.$$	
As an application of the result we obtain the following property for the real principal series representations (see Section \ref{prin}).
\begin{theorem}\label{psr}
Let $\pi$ be a real principal series representation for $\GL_n(\F_q)$ where $q\equiv 1\pmod 4$. Then $w_4(\pi)=0$ for $n\geq 5$.
\end{theorem}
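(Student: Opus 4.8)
The plan is to feed the character values of a principal series into the simplified expression for $w_4^D(\pi)$ obtained above from Theorem~\ref{detectq1}, and then to run a short $2$-adic computation. Since the restriction map $\res\colon H^4(\GL_n(\F_q))\hookrightarrow H^4(D)$ is injective for $q\equiv 1\pmod 4$, it suffices to prove $w_4^D(\pi)=0$. By the displayed formula,
$$w_4^D(\pi)=\binom{m_\pi/2}{2}\sum_{i=1}^n t_i^2+\frac{\dim\pi-\chi_\pi(h_2)}{8}\sum_{1\le i<j\le n}t_it_j,$$
and in $H^*(D)=\ztwo[s_1,\dots,s_n,t_1,\dots,t_n]/(s_1^2,\dots,s_n^2)$ the classes $\sum_i t_i^2$ and $\sum_{i<j}t_it_j$ are supported on disjoint sets of degree-$4$ basis monomials. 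Hence $w_4^D(\pi)=0$ if and only if the two integers $\binom{m_\pi/2}{2}$ and $\tfrac18(\dim\pi-\chi_\pi(h_2))$ are both even, i.e. $8\mid m_\pi$ and $16\mid\dim\pi-\chi_\pi(h_2)$.

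Next I would compute $\chi_\pi(h_1)$ and $\chi_\pi(h_2)$. Write $\pi=\mathrm{Ind}_B^{\GL_n(\F_q)}\tilde\chi$ for $\chi=\chi_1\otimes\cdots\otimes\chi_n$ a character of the diagonal torus inflated to the Borel $B=TU$. In Frobenius's formula $\chi_\pi(h_k)=|B|^{-1}\sum_{x:\,x^{-1}h_kx\in B}\tilde\chi(x^{-1}h_kx)$, the condition $x^{-1}h_kx\in B$ says $h_k$ fixes the complete flag $x\cdot\mathcal F_0$ (with $\mathcal F_0$ the standard flag), and for all $x$ yielding a fixed $h_k$-stable flag $\mathcal F$ the image of $x^{-1}h_kx$ in $B/U\cong T$ is the single element $\diag(\varepsilon_1(\mathcal F),\dots,\varepsilon_n(\mathcal F))$, where $\varepsilon_j(\mathcal F)\in\{\pm1\}$ is the eigenvalue of $h_k$ on $\mathcal F_j/\mathcal F_{j-1}$. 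Summing over $h_k$-stable complete flags — each one being a complete flag in the $(-1)$-eigenspace, a complete flag in the $(+1)$-eigenspace, and a shuffle, on which alone $\prod_j\chi_j(\varepsilon_j)$ depends — yields
$$\chi_\pi(h_k)=N_k\,N_{n-k}\,e_k\!\left(\chi_1(-1),\dots,\chi_n(-1)\right),\qquad N_m:=\prod_{i=1}^m\frac{q^i-1}{q-1},$$
with $e_k$ the $k$-th elementary symmetric function. As each $\chi_j(-1)=\pm1$, put $r=\#\{j:\chi_j(-1)=-1\}$; then $\dim\pi=N_n$, $\chi_\pi(h_1)=N_{n-1}(n-2r)$ and $\chi_\pi(h_2)=(q+1)N_{n-2}\!\left(\binom{n}{2}-2r(n-r)\right)$.

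Then comes the arithmetic; let $v$ denote the $2$-adic valuation. Since $q\equiv 1\pmod 4$, the lifting-the-exponent lemma gives $v\!\left(\tfrac{q^i-1}{q-1}\right)=v(i)$ for all $i\ge1$, whence $v(N_m)=v(m!)$. Using $\tfrac{q^n-1}{q-1}\equiv n\pmod 2$ one finds $m_\pi=\tfrac12\!\left(N_n-N_{n-1}(n-2r)\right)$ is $N_{n-1}$ times an integer, so $v(m_\pi)\ge v((n-1)!)\ge3$ for $n\ge5$; thus $4\mid m_\pi/2$ and $\binom{m_\pi/2}{2}$ is even. For the other coefficient write $\dim\pi-\chi_\pi(h_2)=N_{n-2}(T_1-T_2+T_3)$ with $T_1=\tfrac{q^n-1}{q-1}\cdot\tfrac{q^{n-1}-1}{q-1}$, $T_2=(q+1)\binom{n}{2}$, $T_3=2(q+1)r(n-r)$. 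Here $v(T_1)=v(T_2)=v(n(n-1))=:a$, and after pulling out $2^a$ both cofactors are odd, so $v(T_1-T_2)\ge a+1$; also $v(T_3)=2+v(r(n-r))$; and $v(N_{n-2})=v((n-2)!)$. A short case check on the parity of $n$ and on $r$ — the binding case being $n=5$, where $a=2$, $v(N_3)=1$, and $n$ odd forces $v(r(n-r))\ge1$ for $0<r<n$, so that $v(T_1-T_2+T_3)\ge3$ — gives $16\mid\dim\pi-\chi_\pi(h_2)$ for every $n\ge5$. Both coefficients being even, $w_4^D(\pi)=0$ and therefore $w_4(\pi)=0$.

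The main obstacle is exactly that last $2$-adic estimate at $n=5$: the factorials supply only a single spare factor of $2$ there, so one genuinely needs both the cancellation $v(T_1-T_2)>v(T_1)=v(T_2)$ and the extra factor of $2$ in $r(n-r)$ coming from $n$ being odd, and one must also confirm $m_\pi/2\not\equiv 2,3\pmod 4$. The only other point requiring attention is the geometric bookkeeping in the second paragraph — that $x^{-1}h_kx\in B$ is equivalent to $h_k$-stability of the flag $x\cdot\mathcal F_0$, that the resulting torus element is a function of the flag alone, and that $h_k$-stable complete flags are counted by the two eigenspaces and a shuffle — all of which is routine.
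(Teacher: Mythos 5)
Your proposal is correct, and the overall skeleton matches the paper's: reduce to the formula of Theorem~\ref{w4}, compute $\chi_\pi(h_1),\chi_\pi(h_2)$ from Carter's formula (you rederive it via flag-counting, which is fine), and then run a $2$-adic estimate using $v_2\bigl(\tfrac{q^i-1}{q-1}\bigr)=v_2(i)$ when $q\equiv1\pmod4$, so $v_2(N_m)=v_2(m!)$. Where you genuinely depart is in the treatment of the boundary case $n=5$. The paper disposes of $n\ge6$ by the crude valuation of the $q$-factorial and then handles $n=5$ by an explicit case analysis over the multisets $\{\mathbbm1,\chi_i,\chi_i^{-1},\chi_j,\chi_j^{-1}\}$ and $\{\sgn,\chi_i,\chi_i^{-1},\chi_j,\chi_j^{-1}\}$; you instead parameterize by $r=\#\{j:\chi_j(-1)=-1\}$ and extract a uniform cancellation, namely that $v(T_1)=v(T_2)=v(n(n-1))=:a$ so that $v(T_1-T_2)\ge a+1$, combined with $v(T_3)\ge 2+v(r(n-r))$ and the observation that $n$ odd forces $r(n-r)$ even for $0<r<n$. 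This replaces the case analysis with one clean estimate valid for all $n\ge5$, which is an improvement in transparency (and incidentally avoids the paper's slightly over-stated claim that $v_2(m_\pi)\ge4$ for $n\ge6$; $\ge3$ is what one actually gets at $n=6$, but it suffices). One small imprecision to note: $\binom{m_\pi/2}{2}$ is even iff $m_\pi\equiv0$ or $2\pmod8$, not only when $8\mid m_\pi$, so the ``i.e.'' in your reduction is a sufficient condition rather than an equivalence; since you prove the stronger $8\mid m_\pi$, the argument is unaffected.
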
	
We also compute the second SWC for $q$ odd and the fourth SWC for $q\equiv 1\pmod 4$.

We now describe the layout of the paper. In Section \ref{notpre} recall some basic facts on cohomology of the groups $\GL_n(\F_q)$ and $C_m^n$. In Section \ref{vn2} we compute the SWC for $S_n$ invariant real representations of $C_2^n$. We prove the main theorem in Section \ref{pmt}. The theorems \ref{detectq1} and \ref{detectq3} are proved in Section \ref{secdetect}. We compute the second and the fourth SWC in Section \ref{cal} using the previous results. Section \ref{appl} shows some applications of the theory. Section \ref{cr} deals with the cases of $\GL_n(\C)$ and $\GL_n(\R)$. We work out some examples in Section \ref{exam}.

\bigskip

{\bf Acknowledgements:} The authors would like to thank Steven Spallone for helpful conversations. The first author of this paper was supported by a post-doctoral fellowship from Chennai Mathematical Institute (CMI). The second author of this paper was supported by a post-doctoral fellowship
of National Board of Higher Mathematics India (NBHM).

\section{Notation and Preliminaries}\label{notpre}

A complex representation $(\pi,V)$ of a finite group $G$ is called orthogonal if it preserves a non degenerate symmetric bilinear form. We call $\pi$ real if the representation space $V$ is real. For a complex representation $\pi$ one can consider its realisation $(\pi_{\R},V_{\R})$ by forgetting the complex structure of $V$ and regarding it as a real vector space.

\subsection{Irreducible Representations of $C_m^n$}

Take the set of ordered pairs
$$\mc{P}=\{\tau=(\tau_1,\ldots,\tau_n)\mid \tau_j \in C_m, 1\leq j\leq n\}.$$
Consider the subset $\mc{P}'$ of $\mc{P}$ given by
$$\mc{P}'=\{\tau=(\tau_1,\ldots,\tau_n)\mid \tau_j \in \{0,m/2\}, 1\leq j\leq n\}.$$
Consider the linear character 
$\chi_{\tau} = (\chi^{\tau_1} \otimes \chi^{\tau_2}\otimes \cdots \otimes \chi^{\tau_n} )$ of $C_m^n$ (See Section \ref{notpre})).	
We have $\chi_{-\tau}=\chi_{\tau}^{\vee}$.
Note that $\chi_{\tau}$ is real if $\tau\in \mc{P}'$.  Define an equivalence relation on $\mc{P}$ as 
$$\tau\sim (-\tau_1, -\tau_2,\ldots, -\tau_n).$$
Put $\widehat{\mc{P}}=(\mc{P}-\mc{P}')/\sim$. Therefore a real irreducible representations of $C_m^n$ is either of the form $(\chi_{\tau})_{\R}$ for $\tau\in \widehat{\mc{P}}$ or of the form $\chi_{\tau}$ where $\tau\in \mc{P}'$.

\subsection{Cohomology of $GL_n(\F_q)$, for $q\equiv 1\pmod 4$}

The cohomology ring of $\GL_n(\F_q)$, where $q\equiv 1\pmod 4$ is given by
\begin{equation}\label{cohomGLn}
H^*(\GL_n(\F_q),\ztwo)=\ztwo[a_2,a_4,\ldots,a_{2n}]\otimes E[b_1,b_3,\ldots,b_{2n-1}],
\end{equation} 
where 
$$a_{2l}=\sum_{i_1<i_2<\ldots<i_l}t_{i_1}\cdots t_{i_l},\quad b_{2l-1}=\sum_{i_1<i_2<\ldots<i_l, 1\leq j\leq l}t_{i_1}\cdots \hat{t}_{i_j}\cdots t_{i_l}s_{i_j},\quad 1\leq l\leq n,$$
and $E[b_1,\ldots,b_{2n-1}]$ denotes the alternating algebra. 
For details we refer the reader to \cite[Theorem $8.1$, page $288$]{priddy}.

We know that the symmetric group $S_n$ is the Weyl group of $\GL_n(\F_q)$ with respect to the diagonal torus $D$. It acts on $D$ by permuting the diagonal entries.
\begin{prop}\label{Sninv}Let $\pi$ be a representation of $\GL_n(\F_q)$. Then $\pi\mid_D$ is $S_n$ invariant.
\end{prop}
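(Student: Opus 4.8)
The plan is to show that for any representation $\pi$ of $\GL_n(\F_q)$, the restricted character $\chi_\pi\mid_D$ takes equal values on $S_n$-conjugate elements of $D$, from which the $S_n$-invariance of $\pi\mid_D$ (as a virtual, hence actual, representation) follows. The key point is that the $S_n$-action on $D$ is realized by conjugation inside $\GL_n(\F_q)$: each permutation matrix $P_\sigma \in \GL_n(\F_q)$ conjugates $\diag(d_1,\ldots,d_n)$ to $\diag(d_{\sigma^{-1}(1)},\ldots,d_{\sigma^{-1}(n)})$, so for $d \in D$ we have $P_\sigma d P_\sigma^{-1} = \sigma \cdot d$ with $\sigma\cdot d$ the permuted diagonal element. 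Hence $\chi_{\pi}(\sigma\cdot d) = \chi_\pi(P_\sigma d P_\sigma^{-1}) = \chi_\pi(d)$, since characters are class functions on $\GL_n(\F_q)$.

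Next I would upgrade this character identity to an isomorphism of representations of $D$. For $\sigma \in S_n$, let $\sigma^*(\pi\mid_D)$ denote the representation of $D$ obtained by precomposing $\pi\mid_D$ with the automorphism $d \mapsto \sigma^{-1}\cdot d$ of $D$. The computation above shows $\chi_{\sigma^*(\pi\mid_D)} = \chi_{\pi\mid_D}$ as functions on $D$, and since $D$ is a finite group and complex characters determine representations up to isomorphism, we get $\sigma^*(\pi\mid_D) \cong \pi\mid_D$ for all $\sigma \in S_n$. Concretely, the isomorphism is implemented by $\pi(P_\sigma)$ itself: it intertwines $\pi\mid_D$ with its $\sigma$-twist because $\pi(P_\sigma)\,\pi(d)\,\pi(P_\sigma)^{-1} = \pi(P_\sigma d P_\sigma^{-1}) = \pi(\sigma\cdot d)$. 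This is precisely the statement that $\pi\mid_D$ is $S_n$-invariant in the sense used in the paper (and it is what is needed later to speak of the restriction landing in the $S_n$-invariants of $H^*(D)$).

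I do not anticipate a genuine obstacle here; the proposition is essentially a bookkeeping observation. The only mild subtlety is to be careful about the direction of the permutation action (whether conjugation by $P_\sigma$ permutes the entries by $\sigma$ or $\sigma^{-1}$) and to make sure the convention matches the one under which $S_n$ is identified with the Weyl group $N_{\GL_n}(D)/D$ acting on $D$; but since the set of permutation matrices maps onto $N_{\GL_n}(D)/D$, every element of the Weyl group is realized by conjugation in $\GL_n(\F_q)$ regardless of convention, and that is all that is used. I would therefore present the argument in two short steps: first record that the $S_n$-action on $D$ is inner in $\GL_n(\F_q)$, then conclude invariance of $\pi\mid_D$ by class-function invariance of $\chi_\pi$.
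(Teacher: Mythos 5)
Your argument is correct and is precisely the standard observation the paper has in mind: the paper itself only writes ``We leave the proof to the reader'' for this proposition, so there is no written proof to compare against. Your two-step presentation --- first note that the Weyl group action of $S_n$ on $D$ is implemented by conjugation by permutation matrices inside $\GL_n(\F_q)$, then invoke that $\chi_\pi$ is a class function (with the explicit intertwiner $\pi(P_\sigma)$ as a bonus) --- fills the gap cleanly, and your remark that the orientation of the $S_n$-action is immaterial because the permutation matrices surject onto $N_{\GL_n}(D)/D$ is exactly the right thing to say.
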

\begin{proof}
We leave the proof to the reader.
\end{proof}
This enables us to work with $S_n$ invariant representations of $D$. 
\begin{defn}
Let $G'$ be a subgroup of a finite group $G$. 
We say that $G'$ \emph{detects} the (mod $2$) cohomology of $G$, provided that the restriction map
\begin{equation}
\res: H^*(G, \ztwo) \to H^*(G',\ztwo)
\end{equation}
is injective.
\end{defn}

The action of $S_n$ on $D$ induces an action of $S_n$ on $H^*(D)$. We write $H^*(D)^{S_n}$ to denote the subalgebra fixed under the action.
\begin{theorem}[Corollary of Theorem $3$, page $571$ Quillen]\label{Quillen}
The diagonal subgroup $D$ detects the cohomology of $\GL_n(\F_q)$. In fact, the homomorphism 
$$H^*(\GL_n(\F_q))\to H^*(D)$$
induced from the restriction of $\GL_n(\F_q)$ to $D$ is injective. Moreover, the image lies inside $H^*(D)^{S_n}$.
\end{theorem}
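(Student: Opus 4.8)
The plan is to reduce the statement to the injectivity of $\res^G_D\colon H^*(G)\to H^*(D)$, where $G=\GL_n(\F_q)$; that its image lies in $H^*(D)^{S_n}$ is exactly Proposition \ref{Sninv}. I would establish injectivity in two stages: first that $\ker(\res^G_D)$ consists of nilpotent elements, by a soft argument through Quillen's stratification theorem, and then that it is in fact zero, using the explicit description of $H^*(G)$ recorded in \eqref{cohomGLn}.

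For the first stage the key point is that every elementary abelian $2$-subgroup $E\le G$ is $G$-conjugate into $D$. Since $q$ is odd, any involution of $\GL_n(\F_q)$ has minimal polynomial dividing $(x-1)(x+1)$ and is therefore diagonalisable over $\F_q$, and pairwise commuting diagonalisable matrices are simultaneously diagonalisable; hence $E$ is conjugate into $\diag(\pm1,\dots,\pm1)\subset D$. Consequently, if $x\in\ker(\res^G_D)$ then $x$ restricts to zero on every elementary abelian subgroup of $D$, and hence --- as all elementary abelian subgroups of $G$ are conjugate into $D$ and conjugation acts trivially on $H^*(G)$ --- on every elementary abelian subgroup of $G$. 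Quillen's stratification theorem then forces $x$ to lie in the nilradical of $H^*(G)$.

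For the second stage I would invoke \eqref{cohomGLn}: there $H^*(G)=\ztwo[a_2,\dots,a_{2n}]\otimes E[b_1,\dots,b_{2n-1}]$, its nilradical is the ideal generated by $b_1,\dots,b_{2n-1}$, and the generators are displayed as explicit polynomials in the classes $t_i,s_i$ that generate $H^*(D)=E(s_1,\dots,s_n)\otimes\ztwo[t_1,\dots,t_n]$. Thus $\res^G_D$ is identified with the $\ztwo$-algebra map sending $a_{2l}$ to the elementary symmetric function $e_l(t_1,\dots,t_n)$ and $b_{2l-1}$ to $\sum_k s_k\,e_{l-1}(t_1,\dots,\widehat{t_k},\dots,t_n)$. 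Since $\ztwo[t_1,\dots,t_n]$ is free as a module over $\ztwo[e_1(t),\dots,e_n(t)]=\ztwo[a_2,\dots,a_{2n}]$, a direct computation (a leading-term argument with respect to a term order on the monomials in the $t_i$ and $s_i$ suffices) shows that $1$ together with the squarefree monomials in the images of $b_1,\dots,b_{2n-1}$ remain linearly independent over this subalgebra inside $H^*(D)$. That is precisely injectivity of $\res^G_D$ on the nilradical, and combined with the first stage it gives $\ker(\res^G_D)=0$.

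The main obstacle is the passage from ``nilpotent kernel'' to ``zero kernel''. Quillen's stratification detects cohomology only modulo nilpotents, and $H^*(G)$ does contain nonzero nilpotents (the $b_{2i-1}$), so one cannot avoid an input like \eqref{cohomGLn} together with the verification that the odd-degree generators restrict to exterior-independent classes on $D$; this linear-algebra check, routine but a little involved for general $n$, is the only genuine work. Alternatively, the assertion is part of Quillen's computation of $H^*(\GL_n(\F_q);\ztwo)$ and may simply be quoted.
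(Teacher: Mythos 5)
The paper offers no proof of this statement --- it is quoted directly from Quillen's 1972 Annals paper (whence the bracketed attribution) --- so there is nothing in the text to compare against, and your closing remark that the result may simply be cited is exactly what the authors do. That said, your two-stage reconstruction is sound and worth recording. The first stage is clean and self-contained: over a field of odd characteristic commuting involutions are simultaneously diagonalizable, so every elementary abelian $2$-subgroup of $\GL_n(\F_q)$ is conjugate into $D$, and Quillen stratification together with triviality of inner automorphisms on $H^*(G)$ then places $\ker(\res^G_D)$ inside the nilradical. The second stage is where you should watch for circularity: \eqref{cohomGLn} as cited from Priddy already names $a_{2l}$ and $b_{2l-1}$ as specific elements of $H^*(D)$, which is the detection statement in disguise. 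If one instead reads \eqref{cohomGLn} strictly as an abstract ring structure on $H^*(\GL_n(\F_q))$ together with formulas for the images of the generators under restriction, your argument does become a genuine verification; the needed independence is best organized by $s$-degree, since each $\res(b_{2l-1})=\sum_j s_j\,\partial e_l/\partial t_j$ has $s$-degree one, so products of $k$ distinct $b$'s land in $s$-degree $k$ and their coefficients are the $k\times k$ minors of the Jacobian $(\partial e_i/\partial t_j)$. Establishing linear independence of these minors over $\ztwo[e_1,\dots,e_n]$ in characteristic $2$ is more delicate than ``routine,'' which is precisely why citing Quillen, as the paper does, is the sensible choice.
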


 \section{SWC of $S_n$ Invariant Representations of $C_2^n$}\label{vn2}

An $S_n$ invariant representation of $C_2^n$ will be of the form
\begin{equation}\label{piv2n}
\pi=\bigoplus_{i=0}^{n}c_i\sigma_i,
\end{equation}
where $\sigma_i$ is as in \eqref{sig}.

\begin{prop} \label{chi_i,k}
The character value $\chi_{\sigma_i}(h_k)$ is equal to the coefficient of $y^i$ in the expression $(1-y)^k(1+y)^{n-k}$.
\end{prop}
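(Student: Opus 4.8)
The plan is to compute $\chi_{\sigma_i}(h_k)$ directly from the definitions of $\sigma_i$ and $h_k$, and to recognize the resulting sum as a coefficient in a product of binomials. Recall that $\sigma_i = \bigoplus_{l \in \X_i} \sgn_l$ where $\sgn_l = \bigotimes_{j=1}^n (\sgn)^{l_j}$, and that $h_k = \diag(\underbrace{-1,\ldots,-1}_{k},1,\ldots,1)$ corresponds to the binary string $e = (\underbrace{1,\ldots,1}_{k},0,\ldots,0) \in \X$ under the identification of $C_2^n$ with $\X$. Since each $\sgn$ is the nontrivial character of $C_2$ sending $-1 \mapsto -1$, we have $\sgn_l(h_k) = \prod_{j=1}^n (\sgn)^{l_j}(h_k\text{'s }j\text{th entry}) = \prod_{j : \text{$j$th entry is }-1} (-1)^{l_j} = (-1)^{\sum_{j=1}^k l_j}$. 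Therefore
\[
\chi_{\sigma_i}(h_k) = \sum_{l \in \X_i} (-1)^{l_1 + \cdots + l_k}.
\]

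Next I would evaluate this sum by splitting the index $l = (l_1,\ldots,l_n)$ into its first $k$ coordinates and its last $n-k$ coordinates. If $j$ of the first $k$ coordinates equal $1$ (contributing a sign $(-1)^j$) and $i-j$ of the last $n-k$ coordinates equal $1$, then the number of such strings is $\binom{k}{j}\binom{n-k}{i-j}$, so
\[
\chi_{\sigma_i}(h_k) = \sum_{j} (-1)^j \binom{k}{j}\binom{n-k}{i-j}.
\]
Finally, I would identify this as a coefficient via the binomial theorem: $(1-y)^k = \sum_j \binom{k}{j}(-y)^j$ and $(1+y)^{n-k} = \sum_{r} \binom{n-k}{r} y^r$, so the coefficient of $y^i$ in $(1-y)^k(1+y)^{n-k}$ is exactly $\sum_{j+r = i} (-1)^j \binom{k}{j}\binom{n-k}{r}$, which matches the expression above. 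This completes the proof.

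There is essentially no serious obstacle here; the argument is a routine character computation followed by a generating-function identification. The only point requiring a small amount of care is bookkeeping the identification $C_2^n \cong \X$ and confirming that $h_k$ represents the superclass $\X_k$ (stated in the excerpt), so that evaluating a character of the form $\bigotimes_j (\sgn)^{l_j}$ on $h_k$ produces the sign $(-1)^{l_1+\cdots+l_k}$ rather than some other subset sum; one should double-check the convention that $\sgn = \chi^{m/2}$ with $m = 2$ is the character $C_2 \to \{\pm 1\}$ sending the generator $-1$ to $-1$. Once that is pinned down, the rest is immediate.
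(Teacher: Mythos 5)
Your proof is correct and follows essentially the same approach as the paper: both count strings in $\X_i$ according to how many of the first $k$ coordinates equal $1$, arriving at the Vandermonde-type sum $\sum_j (-1)^j\binom{k}{j}\binom{n-k}{i-j}$, and then identify it as the coefficient of $y^i$ in $(1-y)^k(1+y)^{n-k}$. Your final identification step (a direct Cauchy-product reading of the two binomial expansions) is cleaner than the paper's constant-term manipulation with negative powers of $y$, but the underlying argument is the same.
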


\begin{proof}
Fix two integers $i,k$ such that $0\leq i,k\leq n$.
Let $R_l$ denote the set of elements $a\in \X_i$ such that the first $k$ coordinates of $a$ contains $l$ number of $1$'s. Then for $a\in R_l$ we have $\chi_{\sgn_a}(h_k)=(-1)^l.$ Put $\tau_{l} = \bigoplus\limits_{a \in R_l} \sgn_ a$. Then 
$$\chi_{\tau_{l}}(h_k)=\binom{k}{l}\binom{n-k}{i-l}(-1)^l.$$
Observe that $\bigoplus\limits_{l=0}^k\tau_{l}=\sigma_i$.
Thus
\begin{equation}
\chi_{\sigma_i}(h_k)=\sum_{l=0}^{k}\binom{k}{l}\cdot \binom{n-k}{i-l}(-1)^l.
\end{equation}
Note that this gives the coefficient of $y^i$ in $(1-y)^k(1+y)^{n-k}$.


\end{proof}

%

From \eqref{piv2n} we have
\begin{equation}\label{}
	 \chi_{\pi}(h_i) = \sum_{j=0}^n c_j \cdot \chi_{\sigma_j}(h_i)
\end{equation}
    
Define matrix $M =(m_{ij})$ where $0 \leq i,j \leq n$ such that 
\begin{equation}\label{mijchisigmahi}
	m_{ij} = \chi_{\sigma_j}(h_i).
\end{equation}

Take  
$a={}^t\left(\chi_{\pi}(h_0),\ldots, \chi_{\pi}(h_n)\right)\quad \text{and}\quad m={}^t\left(c_0,\ldots, c_n\right),
$ where ${}^tA$ denotes the transpose of the matrix $A$.  Observe that 
\begin{equation}\label{amM}
a= M\cdot m.
\end{equation}

%

\begin{theorem}\label{thm1}
	We have 
	$$M^2=2^nI_{n+1},$$ 
	where $I_{n+1}$ denotes the identity matrix of rank $n+1$.
\end{theorem}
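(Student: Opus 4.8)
The plan is to show that $M$ is, up to the explicit factor $2^n$, its own inverse by interpreting the entries via the generating function from Proposition \ref{chi_i,k} and computing $\sum_k m_{ik} m_{kj}$ directly as the constant term of a product of binomial generating functions. Concretely, by Proposition \ref{chi_i,k} we have $m_{ki} = \chi_{\sigma_i}(h_k) = [y^i]\,(1-y)^k(1+y)^{n-k}$, so the $(i,j)$ entry of $M^2$ is
\[
(M^2)_{ij} = \sum_{k=0}^n m_{ik} m_{kj} = \sum_{k=0}^n \big([y^k]\,(1-y)^i(1+y)^{n-i}\big)\big([z^j]\,(1-z)^k(1+z)^{n-k}\big).
\]
The first step is to fold the $k$-sum into the $z$-generating function: treating $[y^k]\,(1-y)^i(1+y)^{n-i}$ as coefficients, the inner sum over $k$ of $(1-z)^k(1+z)^{n-k}$ weighted by these coefficients is exactly the result of substituting $y \mapsto \tfrac{1-z}{1+z}$ into $(1-y)^i(1+y)^{n-i}$ and multiplying by $(1+z)^n$ (this is the standard trick of reindexing $\sum_k a_k (1-z)^k(1+z)^{n-k} = (1+z)^n \sum_k a_k \big(\tfrac{1-z}{1+z}\big)^k$, valid since the polynomial has degree $\le n$ in $k$'s "slot").

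The second step is the key computation: substituting $y = \tfrac{1-z}{1+z}$ gives $1 - y = \tfrac{2z}{1+z}$ and $1 + y = \tfrac{2}{1+z}$, hence
\[
(1+z)^n (1-y)^i (1+y)^{n-i} = (1+z)^n \cdot \frac{(2z)^i}{(1+z)^i}\cdot \frac{2^{n-i}}{(1+z)^{n-i}} = 2^n z^i.
\]
So the whole $k$-sum collapses to $2^n z^i$, and therefore $(M^2)_{ij} = [z^j]\, 2^n z^i = 2^n \delta_{ij}$, which is precisely the claim. An alternative, essentially equivalent route avoids generating-function substitutions: recognize that $M$ is (a rescaling of) the character table of the supercharacter theory on $C_2^n$, so $M^2 = 2^n I$ is column orthogonality of supercharacters; but I would prefer the direct generating-function argument above since Proposition \ref{chi_i,k} makes it self-contained.

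The main obstacle is making the reindexing in the first step rigorous: one must be careful that $(1-z)^k(1+z)^{n-k}$ is not a monomial in $k$ but a genuine function of $k$, so the substitution-of-generating-functions step needs the observation that $\sum_{k=0}^n \binom{i}{a}\binom{n-i}{b}(\pm 1)^{\cdots}$-type coefficients do indeed assemble correctly — this is cleanest if one first writes $(1-y)^i(1+y)^{n-i} = \sum_k c_k y^k$ with $c_k$ the explicit signed Vandermonde coefficient and then checks $\sum_k c_k (1-z)^k (1+z)^{n-k} = 2^n z^i$ by the binomial substitution, which is a one-line identity once set up. Everything else is bookkeeping with finite sums, and there are no convergence issues since all series are polynomials.
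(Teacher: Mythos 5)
Your proposal is correct, and it reaches the same target identity as the paper—namely that
\[
\sum_{k=0}^{n} m_{ik}\,(1-z)^{k}(1+z)^{n-k} \;=\; 2^{n}z^{i},
\]
which encodes that the $i$-th row of $M^{2}$ is $2^{n}$ times the $i$-th unit row vector—but you evaluate this sum by a genuinely slicker route. The paper expands $m_{ik}=\chi_{\sigma_k}(h_i)$ as the double binomial sum $\sum_{l}\binom{i}{l}\binom{n-i}{k-l}(-1)^{l}$, swaps the order of summation, and applies the binomial theorem twice to collapse the expression to $(2y)^{i}\,2^{n-i}$. You instead observe that $\sum_{k} m_{ik}\,w^{k}=(1-w)^{i}(1+w)^{n-i}$ is a polynomial of degree $\le n$ in $w$, so after factoring out $(1+z)^{n}$ the sum is just this polynomial evaluated at $w=\tfrac{1-z}{1+z}$, and the substitutions $1-w=\tfrac{2z}{1+z}$ and $1+w=\tfrac{2}{1+z}$ produce $2^{n}z^{i}$ in one stroke. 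Your "main obstacle" is actually a non-issue: the reindexing $\sum_{k}c_{k}(1-z)^{k}(1+z)^{n-k}=(1+z)^{n}\sum_{k}c_{k}\left(\tfrac{1-z}{1+z}\right)^{k}$ is a direct factoring, valid because the sum runs over $0\le k\le n$; no further care is needed. The substitution cleanly packages the paper's two separate binomial-theorem collapses into a single algebraic identity, so your version is shorter and arguably more transparent. One small inaccuracy in your aside: $M^{2}=2^{n}I$ is not "column orthogonality" of supercharacters in the usual Schur sense—that would involve $M M^{T}$ or a weighting by superclass sizes and supercharacter degrees, whereas $(M^{2})_{ij}=\sum_{k}m_{ik}m_{kj}$ mixes rows with columns and is a special self-inverse property of this particular Krawtchouk-type matrix. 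Since you explicitly defer to the generating-function argument, this slip does not affect the proof.
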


\begin{proof}
	Let 
	$$R_i=(m_{i0},\ldots, m_{in}),$$
	be the $i$-th row of $M$. Note that from Proposition \ref{chi_i,k} the element $m_{il}$, $0\leq l\leq n$, is equal to the coefficient of $y^l$ in $(1-y)^i(1+y)^{n-i}$.
	
	We calculate the $x$-th row in the matrix $M^2$.
	\begin{align*}
	\sum_{l=0}^n(M^2)_{xl}y^l&=\sum_{l=0}^n\left(\sum_{i=0}^nm_{xi}m_{il}y^l\right)\\
	&=\sum_{i=0}^nm_{xi}\left(\sum_{l=0}^nm_{il}y^l\right)\\
	&=\sum_{i=0}^{n}\chi_{\sigma_i}(h_x)(1-y)^i(1+y)^{n-i}\\
	&=\sum_{i=0}^{n}\left(\sum_{l=0}^{x}\binom{x}{l}\cdot \binom{n-x}{i-l}(-1)^l\right)(1-y)^i(1+y)^{n-i}\\
	&=\sum_{l=0}^{x}\binom{x}{l}(-1)^l\left(\sum_{i=0}^{n}\binom{n-x}{i-l}(1-y)^i(1+y)^{n-i}\right)\\
	&=\sum_{l=0}^{x}\binom{x}{l}(-1)^l\left(\sum_{t=0}^{n-x}\binom{n-x}{t}(1-y)^{t+l}(1+y)^{n-t-l}\right)\\
	&=\sum_{l=0}^{x}\binom{x}{l}(-1)^l(1-y)^l\left(\sum_{t=0}^{n-x}\binom{n-x}{t}(1-y)^{t}(1+y)^{n-t-l-x+x}\right)\\
	&=\sum_{l=0}^{x}\binom{x}{l}(-1)^l(1-y)^l(1+y)^{x-l}\left(\sum_{t=0}^{n-x}\binom{n-x}{t}(1-y)^{t}(1+y)^{n-x-t}\right)\\
	&=\sum_{l=0}^{x}\binom{x}{l}(y-1)^l(y+1)^{x-l}(1-y+1+y)^{n-x}\\
	&=(2y)^x2^{n-x}\\
	&=2^ny^x.
	\end{align*}
\end{proof}

Since $M$ is invertible, we have 
\begin{equation}\label{m=Minva}
m = M^{-1} \cdot a.
\end{equation}

\begin{prop}\label{pisigmak}
Let $\pi$ be the representation of $C_2^n$ as in equation \eqref{piv2n}. Then we have
$$c_k=\frac{1}{2^n}\sum_{i=0}^{n}\chi_{\sigma_i}(h_k)\chi_{\pi}(h_i).$$
\end{prop}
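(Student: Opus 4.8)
The final statement to prove is Proposition \ref{pisigmak}, which expresses the multiplicity $c_k$ of $\sigma_k$ in $\pi = \bigoplus_{i=0}^n c_i \sigma_i$ as $c_k = \frac{1}{2^n}\sum_{i=0}^n \chi_{\sigma_i}(h_k)\chi_\pi(h_i)$.

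This is essentially immediate from the linear algebra already set up. The plan is as follows. First, recall from the discussion preceding the statement that if we set $a = {}^t(\chi_\pi(h_0),\dots,\chi_\pi(h_n))$ and $m = {}^t(c_0,\dots,c_n)$, then $a = M\cdot m$ where $M = (m_{ij})$ with $m_{ij} = \chi_{\sigma_j}(h_i)$. Next, invoke Theorem \ref{thm1}, which gives $M^2 = 2^n I_{n+1}$; in particular $M$ is invertible with $M^{-1} = \frac{1}{2^n}M$. Therefore $m = M^{-1}a = \frac{1}{2^n}M a$. Reading off the $k$-th coordinate of this vector equation gives
$$c_k = \frac{1}{2^n}\sum_{i=0}^n m_{ki}\,\chi_\pi(h_i) = \frac{1}{2^n}\sum_{i=0}^n \chi_{\sigma_i}(h_k)\,\chi_\pi(h_i),$$
using $m_{ki} = \chi_{\sigma_i}(h_k)$ by definition.

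The one subtlety worth a sentence is the identification $m_{ki} = \chi_{\sigma_i}(h_k)$: the matrix $M$ is defined by $m_{ij} = \chi_{\sigma_j}(h_i)$, so its $(k,i)$ entry is indeed $\chi_{\sigma_i}(h_k)$, which is exactly what appears in the claimed formula. One should also note that $M$ being symmetric is not needed here — only the explicit inverse from $M^2 = 2^n I$. So there is essentially no obstacle; the proposition is a direct corollary of Theorem \ref{thm1} combined with the relation $a = Mm$, and the proof is a two-line computation.

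One could alternatively give a self-contained character-theoretic argument: since the $\sigma_i$ are the supercharacters of $C_2^n$ under the $S_n$-action, they form an orthogonal family with respect to a suitable inner product on superclass functions, and $c_k$ is recovered by pairing $\chi_\pi$ against $\chi_{\sigma_k}$ after normalizing by the superclass sizes $|\X_i| = \binom{n}{i}$. But this reproves Theorem \ref{thm1} in disguise, so I would simply cite Theorem \ref{thm1} and the relation $m = M^{-1}a$ already recorded in \eqref{m=Minva}, and extract the $k$-th component.
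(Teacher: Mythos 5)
Your argument is exactly the paper's: both rely on the relation $a = Mm$, the identity $M^2 = 2^n I_{n+1}$ from Theorem \ref{thm1} to get $M^{-1} = \frac{1}{2^n}M$, and then reading off the $k$-th coordinate of $m = \frac{1}{2^n}Ma$ using $m_{ki}=\chi_{\sigma_i}(h_k)$. You have simply spelled out the two-line linear-algebra computation that the paper compresses into ``follows from Equations \eqref{mijchisigmahi}, \eqref{m=Minva} and Theorem \ref{thm1}.''
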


\begin{proof}
The proof follows from  Equations \eqref{mijchisigmahi} \eqref{m=Minva} and Theorem \ref{thm1}.
\end{proof}

%
%
%
%

\begin{theorem}\label{wpiv2n}
Let $\pi$ be an $S_n$ invariant representation of $C_2^n$ of the form $\pi=\bigoplus\limits_{i=1}^{n}c_i\sigma_i$.Then we have 
\begin{equation}
		w(\pi)=\prod_{i=1}^{n}\left(\prod_{l\in\X_i}(1+l\cdot v)\right)^{c_i}
	\end{equation}
where $c_i=\frac{1}{2^n}\sum\limits_{k=0}^{n}\chi_{\sigma_k}(h_i)\chi_{\pi}(h_k)$.
\end{theorem}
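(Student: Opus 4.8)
The plan is to reduce the statement to the Whitney sum formula together with a one-line computation of $w(\sgn_l)$ for a single binary string $l\in\X$. Since we are given $\pi=\bigoplus_{i=1}^n c_i\sigma_i$ with the $c_i$ nonnegative integers (the summand $\sigma_0=\mathbb{1}$ would contribute only $w(\sigma_0)=1$ and so is harmless), multiplicativity of the total Stiefel--Whitney class gives
\[
w(\pi)=\prod_{i=1}^n w(\sigma_i)^{c_i},
\]
and, applied once more to $\sigma_i=\bigoplus_{l\in\X_i}\sgn_l$ from \eqref{sig},
\[
w(\sigma_i)=\prod_{l\in\X_i} w(\sgn_l).
\]
Thus it is enough to prove $w(\sgn_l)=1+l\cdot v$ for every $l\in\X$.

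To do this, observe that $\sgn_l=\bigotimes_{j=1}^n(\sgn)^{l_j}$ is one-dimensional, so $w(\sgn_l)=1+w_1(\sgn_l)$ and only $w_1$ is at issue. Writing $\sgn_l=\prod_{j:\,l_j=1}\sgn_{e_j}$ as a product inside $\Hom(C_2^n,\ztwo)$ and using that $w_1$ is additive on tensor products of real linear characters, we obtain
\[
w_1(\sgn_l)=\sum_{j:\,l_j=1} w_1(\sgn_{e_j})=\sum_{j=1}^n l_j v_j=l\cdot v,
\]
by the definition $v_j=w_1(\sgn_{e_j})$ and the convention for $l\cdot v$. Substituting back yields
\[
w(\pi)=\prod_{i=1}^n\Bigl(\prod_{l\in\X_i}(1+l\cdot v)\Bigr)^{c_i}.
\]

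Finally, the expression $c_i=\tfrac{1}{2^n}\sum_{k=0}^n\chi_{\sigma_k}(h_i)\chi_\pi(h_k)$ is precisely Proposition \ref{pisigmak} (with the summation index renamed), which itself follows from $M^2=2^nI_{n+1}$ (Theorem \ref{thm1}) together with \eqref{mijchisigmahi}. The only step that is not pure bookkeeping is the additivity of $w_1$ over tensor products of characters; I would justify this by recalling that $w_1\colon\Hom(C_2^n,\ztwo)\to H^1(C_2^n;\ztwo)$ — the map sending a homomorphism $C_2^n\to\{\pm1\}$ to the first Stiefel--Whitney class of the associated real line — is a group isomorphism, so it carries the product of characters to the sum of the corresponding cohomology classes. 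With this in hand the proof is complete.
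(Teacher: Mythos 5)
Your proof is correct and follows the same route as the paper: decompose $\pi$ into the $\sigma_i$, apply the Whitney sum formula twice, and invoke Proposition \ref{pisigmak} for the formula for $c_i$. The only difference is that you spell out the justification of $w(\sgn_l)=1+l\cdot v$ via the isomorphism $\Hom(C_2^n,\ztwo)\cong H^1(C_2^n;\ztwo)$, a step the paper simply asserts.
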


\begin{proof}
	We have $$w(\sigma_i) = \prod_{l\in\X_i}(1+l\cdot v)$$ and $$w(\pi)=\prod_{i=1}^n(w(\sigma_i))^{c_i}.$$ Now from Proposition \ref{pisigmak}  we have 
	$$c_i =\frac{1}{2^n}\sum_{k=0}^{n}\chi_{\sigma_k}(h_i)\chi_{\pi}(h_k). $$ 
Hence the result follows.
	
	\end{proof}

\section{proof of main theorem}\label{pmt}

For a subset $\I =\{i_1, i_2, \ldots,i_l\} $ of $\{1,2,\ldots, n\}$ and for a multiset  $\J = \{j_1,j_2, \ldots, j_k\}$ where $j_i \in \{1,2, \ldots, n\}$ we define $$ s^{\I} = s_{i_1} \cdots s_{i_l} \text{ and } t^{\J}= t_{j_1} t_{j_2} \cdots t_{j_k}.$$
Observe that $H^*(C_{4m}^n) = \dfrac{\ztwo[s_1, \ldots, s_n, t_1, \ldots, t_n]}{(s_1^2,\ldots, s_n^2)}$ is a $\ztwo$ vector space with basis $s^{\I}t^{\J}$, where $\I \,\,( \text{ resp. } \J)$ varies over  all subsets ( resp. multi-sets) over $\{1,2, \ldots ,n\}$.

\begin{prop}\label{polysi}
	Let $q\equiv 1 \pmod 4$. If $\pi$ is a real representation of $\GL_n(\F_q)$ and if $w(\pi) = \sum\limits_{\I, \J} a_{\I, \J} s^{\I} t^{\J}$, then elements of the form $s^{\I}$ have coefficient $0$ i.e. $a_{\I, \emptyset} = 0$ if $|\I|\geq 2$.
\end{prop}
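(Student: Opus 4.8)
The plan is to exploit the restriction to the diagonal torus $D \cong C_{q-1}^n$ together with the $S_n$-invariance supplied by Proposition \ref{Sninv}. Since $q \equiv 1 \pmod 4$, we have $4 \mid (q-1)$, so $H^*(D) = \ztwo[s_1,\dots,s_n,t_1,\dots,t_n]/(s_i^2)$, and the restriction $\res \colon H^*(\GL_n(\F_q)) \hookrightarrow H^*(D)$ is injective by Theorem \ref{Quillen}; its image lies in the $S_n$-invariants. Thus it suffices to show that $w^D(\pi)$, expanded in the monomial basis $s^{\I}t^{\J}$, has no pure $s^{\I}$ term with $|\I| \geq 2$.

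The first step is to decompose $\pi|_D$ as an $S_n$-invariant real representation of $C_{q-1}^n$: using the description of real irreducibles of $C_m^n$ from the preliminaries, write $\pi|_D = m_0\mathbb{1} \oplus \bigoplus_{i=1}^n m_i \sigma_i \oplus \bigoplus_{\tau \in \widehat{\mc P}} m_\tau (\chi_\tau)_{\R}$. The Whitney product formula gives $w^D(\pi) = w(m_0\mathbb{1}) \cdot \prod_i w(\sigma_i)^{m_i} \cdot \prod_\tau w((\chi_\tau)_\R)^{m_\tau}$. Now I would compute the Stiefel–Whitney classes of each piece: the linear sign characters $\sgn_l$ contribute only degree-$1$ classes, namely $w(\sgn_l) = 1 + l\cdot s$ (writing $s = (s_1,\dots,s_n)$), so $w(\sigma_i) = \prod_{l \in \X_i}(1 + l\cdot s)$ involves only the $s_j$'s and their products; whereas the two-dimensional pieces $(\chi_\tau)_\R$ with $\tau \notin \mc P'$ have $w((\chi_\tau)_\R) = 1 + w_2((\chi_\tau)_\R)$, a sum of a degree-$0$ and a degree-$2$ class, the latter expressible in the $t_j$'s. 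So potential pure $s^{\I}$ terms in $w^D(\pi)$ can only come from the factor $\prod_i w(\sigma_i)^{m_i}$, i.e. from the part of $\pi|_D$ built out of sign characters.

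The crux is then: why does $\prod_{i=1}^n \left(\prod_{l\in\X_i}(1+l\cdot s)\right)^{m_i}$, after reduction modulo the relations $s_j^2 = 0$, have vanishing coefficient on every squarefree monomial $s^{\I}$ with $|\I| \geq 2$? The key observation is that this product is a product of linear forms $1 + (\text{linear in the } s_j)$, and a pure degree-$k$ term $s^{\I}$ with $|\I| = k \geq 2$ arises as an elementary-symmetric-type expression in these linear forms; but crucially, working mod $(s_j^2)$, the relevant coefficient is governed by the \emph{multiplicity} $m_i$ appearing as an exponent, and since $(1 + l\cdot s)^2 = 1 + (l\cdot s)^2 = 1$ mod $(s_j^2)$ when $\nu(l)=1$ — and more generally the square of any $1 + l\cdot s$ is $1 + (l\cdot s)^2$, whose degree-$2$ part is a sum of squares $\sum l_j s_j^2 = 0$ — the exponents $m_i$ effectively only matter mod $2$, and one reduces to showing the squarefree-$s$ part of $w^D(\pi)$ is determined by $\det \pi$. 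Concretely, I expect the argument to run: the degree-$1$ part of $w^D(\pi)$ is $w_1(\pi|_D) = \delta b_1$ where $b_1 = \sum s_i$ (this is $w_1$ of the determinant character), and then one shows that the full pure-$s$ part of $w^D(\pi)$ equals $1 + \delta b_1$, which has no $s^{\I}$ with $|\I|\geq 2$ — this is consistent with the shape of the Main Theorem. The main obstacle is bookkeeping the reduction mod $(s_j^2)$ carefully: one must check that no pure-$s$ monomial of degree $\geq 2$ survives, which amounts to a parity/combinatorial identity on the coefficients $c_i$ (or the $m_i$), and possibly invoking the Frobenius–Schur / orthogonality constraints that force $\pi$ to be a genuine real (not merely complex) representation. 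I would isolate this as the technical heart and handle it by expanding $\log w^D(\pi) = \sum m_i \sum_{l \in \X_i} \log(1 + l\cdot s)$ formally, using $\log(1+x) \equiv x + x^2/2 + \cdots$ but noting $x^2 = 0$ for $x = l\cdot s$ a squarefree linear form with all $l_j \in \{0,1\}$ only when... — in fact $x^2 = \sum_{j} l_j s_j^2 + 2\sum_{j<k} l_j l_k s_j s_k = 2\sum_{j<k}l_jl_k s_js_k$ mod $(s_j^2)$, which vanishes mod $2$, so $x^2 = 0$ and $\log(1+x) = x$ in this ring, giving $\log w^D(\pi) = \left(\sum_i m_i \sum_{l\in\X_i} \nu(l)^{-1}\cdots\right)$ — more simply $w^D(\pi)$'s pure-$s$ part is $\exp$ of a linear form, i.e. $1 + (\text{linear})$, which immediately kills all $|\I|\geq 2$ terms. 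That last simplification is what I would aim to make rigorous.
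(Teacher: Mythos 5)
There is a genuine gap, and it sits exactly where you flagged it as ``the technical heart.'' Your decomposition of $\pi|_D$ and the Whitney product formula are fine, and your observation that $(l\cdot s)^2 = 0$ in $\ztwo[s_1,\dots,s_n]/(s_i^2)$ (so that the exponents $m_i$ only matter mod $2$) is correct. But the concluding $\log/\exp$ argument is not valid, and the claim it is meant to justify is actually false at the level of generality at which you attempt it. First, $\log$ and $\exp$ do not make sense over $\ztwo$ because they involve divisions by $2,3,\dots$. Second, and more decisively, even granting some formal version, the assertion that the pure-$s$ part of $\prod_{i}\bigl(\prod_{l\in\X_i}(1+l\cdot s)\bigr)^{m_i}$ is always $1+(\text{linear})$ is simply wrong: take $m_1=1$ and $m_i=0$ for $i\neq 1$, i.e.\ $\sigma_1$, whose total SW class is $\prod_{j=1}^n(1+s_j)=\sum_{\I}s^{\I}$, which has nonzero coefficient on every $s^{\I}$ including $|\I|\geq 2$. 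A product $\prod_a(1+x_a)$ with each $x_a^2=0$ is \emph{not} $1+\sum_a x_a$ in this ring; the cross terms $x_ax_b$ survive. So the desired vanishing cannot be derived from $S_n$-invariance of $\pi|_D$ and the Whitney formula alone.

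What is actually being used is that $w(\pi)$ lies in the image of $\res\colon H^*(\GL_n(\F_q))\to H^*(D)$, and this image is strictly smaller than the $S_n$-invariants. The paper's proof invokes the explicit description of $H^*(\GL_n(\F_q))$ from Equation~\eqref{cohomGLn}, with $a_{2l}$ and $b_{2l-1}$ written out as polynomials in $s_i,t_j$, and then applies the ring homomorphism $\pr\colon \ztwo[s,t]/(s_i^2)\to\ztwo[s]/(s_i^2)$ sending $t_i\mapsto 0$. Every $a_{2l}$ and every $b_{2l-1}$ with $l>1$ involves at least one $t$ in each monomial, so $\pr$ kills them, while $\pr(b_1)=b_1$; hence $\pr\bigl(H^*(\GL_n(\F_q))\bigr)=\ztwo[b_1]$, which contains no squarefree $s^{\I}$ of degree $\geq 2$. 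Your proof never brings in this structural input, which is why the argument has no mechanism to rule out terms like $s_1s_2$. To repair the proof you would need to invoke the Quillen/Priddy presentation (or some equivalent constraint on the image of restriction); the decomposition of $\pi|_D$ is a red herring here.
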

\begin{proof}
	Consider the projection map $$\mathrm{pr}:\dfrac{\ztwo[s_1,\ldots,s_n,t_1,\ldots,t_n]}{(s_1^2,\ldots, s_n^2)}\to \dfrac{\ztwo[s_1,\ldots,s_n]}{(s_1^2,\ldots, s_n^2)}$$
	  defined by $\mathrm{pr}(s_i)=s_i$ and $\mathrm{pr}(t_i)=0$ for all $i$. This gives $\pr(a_{2i})=0$ for all $i$ and $\pr(b_{2i-1})=0$ for $i>1$. Then from Equation \eqref{cohomGLn} we have 
	\begin{equation}\label{impr}   
		\mathrm{pr}\left(H^*(\GL_n(\F_q))\right)=\ztwo[b_1].
	\end{equation}
Now if $s^I$ appears as a term in $w(\pi)$, where $|I|\geq 2$, then it survives in $\pr(w(\pi))$. But this contradicts Equation \eqref{impr}. 

\end{proof}


\begin{proof}[\textbf{Proof of Main Theorem} \ref{main}] 
	Let  $\widehat{P}$ be as defined in Section \ref{notpre}. Let $\pi$ be a real representation of $\GL_n(\F_q)$. Recall that for a subgroup $H$ of $\GL_n(\F_q)$ we write $w_i^H(\pi)$ and $w^H(\pi)$ to denote $w_i(\pi\mid_{H})$ and $w(\pi\mid_{H})$ respectively.
	
	Since $\pi \mid_D$ is an $S_n$ invariant representation of $D$ we write
	
	\begin{equation}
		\pi \mid_D = \mathbbm{1}^{\oplus m_0} \bigoplus_{i=1}^n \sigma_i^{\oplus m_i} \bigoplus_{\tau \in \widehat{P}} ((\chi_\tau)_\R)^{ \oplus m_\tau}.
	\end{equation}
	Therefore we may write
	\begin{align*}
		w^D(\pi) =& w(\mathbbm{1}^{\oplus m_0} \bigoplus_{i=1}^n \sigma_i^{\oplus m_i})\cdot w(\bigoplus_{\tau \in \widehat{P}} ((\chi_\tau)_\R)^{\oplus m_\tau})\\
		=&(1 + Q(s_1, \ldots, s_n))(1+ P(t_1, \ldots, t_n)) \left(\text{ where } P, Q \text{ do not have constant term}\right),\\
		=& (1 + \delta b_1) (1 + P(t_1, \ldots t_n)) \left(\text{ follows from Proposition} \ref{polysi}\right),\\
	\end{align*}
	This calculation shows that $w_1^D(\pi) = \delta b_1$. From this we conclude that \[\delta= \begin{cases} 0,\text{ if } \det \pi =1,\\ 1, \text{ otherwise. }
	\end{cases}
	\]
	Consider the representation $\psi = (\chi^1)_\R$ of $C_{q-1}$. 
	Let $C_2 < C_{q-1}$ be the subgroup of order $2$. Then the restriction map $\res: H^*(C_{q-1}) \to H^*(C_2)$ maps $t = w_2(\psi)$ to $w_2^{C_2}(\psi)=w_2(\sgn \oplus \sgn) = v^2$. Thus the restriction map $\res_n:H^*(C_{q-1}^n) \to H^*(C_2^n)$ maps $t_i$ to $v_i^2$ and $s_i$ to $0$.
	\begin{align}
		w^{C_2^n}(\pi) &= 1+ P(v_1^2, \ldots, v_n^2)\\
		&=(1 + P(v_1, \ldots, v_n))^2. \label{resc2n} 
	\end{align}
	
	On the other hand $\pi\mid_{C_2^n}$ is an $S_n$ invariant representation, therefore by Theorem \ref{wpiv2n} we have
	
	$$ w^{C_2^n}(\pi)=\prod_{i=1}^n (\prod_{l\in \X_i} (1+ \sum_j l_j v_j))^{c_i},$$
	where $c_i = \dfrac{1}{2^n}\sum\limits_{k=0}^n \chi_{\sigma_k}(h_i)\chi_\pi(h_k).$
	Now since $H^*(C_2^n)= \ztwo[v_1, \ldots,v_n]$ is a Unique Factorization domain, we have
	$1+P(v_1, \ldots , v_n) = \prod_{i=1}^n(\prod_{l \in \X_i}(1 + \sum_j l_j v_j))^{c_i/2}.$
	
	Thus we obtain $$w^D(\pi) = (1+\delta b_1)\prod_{i=1}^n\left(\prod_{l \in \X_i}(1 + l\cdot t)\right)^{c_i/2},$$
	where $l\cdot t = \sum_j l_j t_j$.
	
	In the case $q \equiv 3 \pmod{4}$ we have 
$$\mathrm{res^D_{C_2^n}}:H^*(D) \to H^*(C_2^n)$$ 
is an isomorphism as $C_2^n$ is the $2$ Sylow subgroup of $D$. (see \cite[Propositions $5$ and $6$]{ganjo}).
Now we apply Theorem \ref{wpiv2n} to obtain 
	
	$$w^{C_2^n}(\pi)=\prod_{i=1}^n (\prod_{l\in \X_i} (1+ \sum_j l_j v_j))^{c_i},$$ where again $c_i = \dfrac{1}{2^n}\sum_k \chi_{\sigma_k}(h_i)\chi_\pi(h_k). $
	This completes the proof of the main theorem.
\end{proof} 

\begin{remark}
Since $c_i$ is the multiplicity of $\sigma_i$ in $\pi\mid_{C_2^n}$ , it is an integer, where $\sigma_i$ is a representation of $C_2^n$. Moreover, if $q\equiv 1\pmod 4$, then $\sigma_j\mid_{C_2^n}=\binom{n}{j}\mathbb{1}$, where $\sigma_j$ is a representation of $D$. Also $(\chi)_{\R}\mid_{C_2^n}=2\chi\mid_{C_2^n}$. Therefore the integers $c_i$ are even for $i\geq 1$.
\end{remark}

\begin{remark}
For a real representation of $\GL_n(\F_q)$, where $q\equiv 1\pmod 4$, one can write the expression for $w(\pi)$ in terms of the generators of $H^*(\GL_n(\F_q))$, namely $a_{2l}, b_{2l-1}$ for $1\leq l\leq n$. This follows from the fact that the terms $a_{2l}$ are elementary symmetric functions in $t_i$ for $1\leq i\leq n$. For example if $n=3$, then
\begin{equation*}
	\begin{split}	
		w(\pi)=&\left(1+\delta\cdot \sum_{i=1}^3 s_i\right)\left((1+t_1)(1+t_2)(1+t_3)\right)^{m_1}\\
		&\left((1+t_1+t_2)(1+t_1+t_3)(1+t_2+t_3)\right)^{m_2}\\
		&(1+t_1+t_2+t_3)^{m_3}.
	\end{split}
\end{equation*}
Therefore one has
\begin{equation*}
	\begin{split}	
		w(\pi)=&(1+\delta\cdot b_1)(1+a_2+a_4+a_6)^{m_1}\\
		&(1+a_2^2+2a_2+a_4+a_2a_4-a_6)^{m_2}\\
		&(1+a_2)^{m_3}.
	\end{split}
\end{equation*}
\end{remark}

\section{Detection Results}\label{secdetect}

We write 
$$E[x_1,\ldots,x_n]=\dfrac{\ztwo[x_1,\ldots,x_n]}{(x_1^2,\ldots,x_n^2)}.$$
Let $D_{n}$ denote the diagonal torus of $\GL_n(\F_q)$, where $q\equiv 1\pmod 4$. Take $m\geq n$. We have
$$H^*(D_m)=\dfrac{\ztwo[t_1,\ldots,t_m,s_1,\ldots,s_m]}{( s_1^2,\ldots, s_n^2)},$$
and
$$H^*(D_{n-1})=\dfrac{\ztwo[t_1',\ldots,t_{n-1}',s_1',\ldots,s_{n-1}']}{( {s_1'}^2,\ldots,{s_n'}^2)}.$$
Consider the inclusion map $j_n:\GL_{n-1}(\F_q)\to \GL_m(\F_q)$ given by 
$$j_n(A)=\begin{pmatrix}
A & 0\\
0 & I_{m-n+1}
\end{pmatrix},$$
where $A\in \GL_{n-1}(\F_q)$. It will induce a map on the diagonal subgroups namely $j_n:D_{n-1}\to D_m$ (we use the same notation for the induced map). We have 
\[
j_n^*(t_l)=
\begin{cases}
t_l',& \text{if $l<n$}\\
0, & \text{if $l\geq n$}
\end{cases}
\]
Similarly
\[
j_n^*(s_l)=
\begin{cases}
s_l',& \text{if $l<n$}\\
0, & \text{if $l\geq n$}
\end{cases}
\]
It will induce a map 
$$j_n^*:\ztwo[a_2,\ldots,a_{2m}]\otimes E[b_1,b_3, \ldots , b_{2m-1}]\to \ztwo[a_2',\ldots,a_{2n-2}'] \otimes E[b_1',\ldots,b_{2n-3}'] ,$$
where $$j_n^*(a_{2l})= a_{2l}'=\sum\limits_{1\leq i_1<i_2<\ldots<i_l\leq n-1}t_{i_1}'\cdots t_{i_l}',$$ 
for $l<n$ and 
$$j_n^*(b_{2k-1}) = b_{2k-1}' = \sum\limits_{1 \leq j \leq k} \left(\sum\limits_{1\leq i_1<i_2<\ldots<i_k\leq n-1} t_{i_1}'\cdots \hat{t'}_{i_j}\cdots t_{i_k}' s_{i_j}'\right),$$
for $k<n$.
Therefore $j_n^*(a_{2l})=0$ when $l\geq n$ and $j_n^*(b_{2k-1})=0$  for $k\geq n$.

\begin{proof}[Proof of Theorem \ref{detectq1}]
Put $R_m =\ztwo[a_2,\ldots,a_{2m}] $, $E_m= E[b_1,b_3, \ldots , b_{2m-1}], R'_{n-1}=\ztwo[a_2',\ldots,a'_{2n-2}]$ and $E_{n-1}'= E[b_1',b_3', \ldots , b'_{2n-3}]$. Since the elements $a_2,\ldots, a_{2n-2}$ are algebraically independent, we have 
$$\ker j_n^*\mid _{R_m}=\langle a_{2n}, a_{2n+2},\ldots,a_{2m} \rangle.$$
Take $R_m=\bigoplus_{i}A_i$ and $R_{n-1}'=\bigoplus_{i}A_i'$, where $A_i$ and $A_i'$ are the $i$-th degree components. Then comparing degrees we have $A_i\cap \ker j_n^*=0$, for $i<2n$. So the map $j_n^*:A_i\to A_i'$ is injective for $i<2n-1$.

Note that $E[b_1,b_3,\ldots,b_{2m-1}]$ is a $\ztwo$ vector space
with basis elements 
$$b_{2i_1-1}\cdots b_{2i_k-1},$$ 
where $1\leq i_1<i_2<\cdots<i_k\leq m$ and $1\leq k\leq m$. In each basis element a particular $b_j$ can have exponent $1$ or $0$. Therefore $\dim E[b_1,b_3,\ldots,b_{2m-1}]=2^m$. Similarly $\dim E[b_1',b_3',\ldots,b_{2n-3}']=2^{n-1}$. Since $j_n^*$ is surjective we have $\dim \ker j_n^*\mid_{E_m}=2^m-2^{n-1}$. 

Consider the ideal $\mathfrak{I}=\langle b_{2n-1}, b_{2n+1},\ldots, b_{2m-1}\rangle$ of $E_m$. As a $\ztwo$ vector space $\dim \mathfrak{I}=2^m-2^{n-1}$. Also for $x\in \mathfrak{I}$ one has $j_n^*(x)=0$. Therefore we obtain  
$$\ker j_n^*\mid_{E_m}=\langle b_{2n-1}, b_{2n+1},\ldots, b_{2m-1}\rangle.$$
Let $E_m = \bigoplus_i B_i$ and $E_{n-1}'=\bigoplus_jB_j'$. Comparing the degrees we get $\ker j_n^*\cap B_i=0$ for $i<2n-1$. So the map $j_n^*:B_i\to B_i'$ is injective for $i<2n-1$.

The $i$th graded part of $R_m\otimes E_m$ is of the form $\bigoplus\limits_{k+l=i}A_k\otimes B_l$.
Similarly, the $i$th graded part of $R_{(n-1)}'\otimes E_{(n-1)}'$ is of the form $\bigoplus\limits_{k+l=i}A_k'\otimes B_l'$. To show the injectivity of the map $j_n^*$
it is enough to show that the map $j_n^*:A_k\otimes B_{i-k}\to A_k'\otimes B_{i-k}'$ is injective for every $0\leq k\leq i<2n-1$. It follows from the fact that the maps $j_n^*:A_k\to A_k'$ and $j_n^*:B_k\to B_k'$ are injective for $0\leq i<2n-1$.
\end{proof}

\begin{theorem} \label{detectq3}
The map $\res:H^i(\GL_m(\F_q))\to H^i(\GL_{n-1}(\F_q))$, where $m\geq n$ and $q\equiv 3\pmod 4$ is injective for $i<n$.
\end{theorem}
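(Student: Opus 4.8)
The plan is to mirror the proof of Theorem \ref{detectq1}, but now keeping track of the fact that when $q \equiv 3 \pmod 4$ the cohomology ring of $\GL_n(\F_q)$ is considerably smaller. Recall that for $q \equiv 3 \pmod 4$ one has $\res: H^*(\GL_n(\F_q)) \hookrightarrow H^*(C_2^n) = \ztwo[v_1,\ldots,v_n]$, and the image is the ring of $S_n$-invariants, which is generated by elementary symmetric polynomials in the $v_i$ of degrees $1,2,\ldots,n$ — equivalently, $H^*(\GL_n(\F_q))$ is a polynomial ring $\ztwo[c_1,\ldots,c_n]$ on generators in degrees $1,\ldots,n$ (these are, up to the restriction isomorphism, the classes detected on the $2$-Sylow $C_2^n$). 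So the relevant bound changes from $2n-1$ to $n$ precisely because the lowest-degree ``new'' generator $c_n$ that dies under restriction to $\GL_{n-1}$ sits in degree $n$, rather than in degree $2n$ as the class $a_{2n}$ did in the $q \equiv 1$ case.

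First I would set up the inclusion $j_n : \GL_{n-1}(\F_q) \hookrightarrow \GL_m(\F_q)$ exactly as before, and pass to the commuting square of restriction maps to the $2$-Sylow subgroups $C_2^{n-1} \subset D_{n-1}$ and $C_2^m \subset D_m$; since $C_2^k$ detects the cohomology of $\GL_k(\F_q)$ when $q \equiv 3 \pmod 4$ (as used in the proof of the Main Theorem, via \cite{ganjo}), it suffices to understand the map on $S_m$- and $S_{n-1}$-invariant polynomial rings induced by $v_l \mapsto v_l'$ for $l < n$ and $v_l \mapsto 0$ for $l \geq n$. Concretely, writing $H^*(\GL_m(\F_q)) = \ztwo[e_1,\ldots,e_m]$ with $e_j$ the $j$-th elementary symmetric polynomial in $v_1,\ldots,v_m$, the map $j_n^*$ sends $e_j$ to the $j$-th elementary symmetric polynomial $e_j'$ in $v_1',\ldots,v_{n-1}'$ for $j \leq n-1$, and sends $e_j$ to $0$ for $j \geq n$. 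Thus $\ker j_n^* = \langle e_n, e_{n+1}, \ldots, e_m \rangle$, since $e_1,\ldots,e_{n-1}$ are algebraically independent in the target.

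Next, grading $\ztwo[e_1,\ldots,e_m] = \bigoplus_i A_i$ and $\ztwo[e_1',\ldots,e_{n-1}'] = \bigoplus_i A_i'$ by cohomological degree, I would observe that every element of $\ker j_n^* = \langle e_n, \ldots, e_m\rangle$ has degree at least $n$ (since $\deg e_j = j \geq n$ for the generators of the ideal, and multiplying by anything only raises the degree). Hence $A_i \cap \ker j_n^* = 0$ for $i < n$, so $j_n^* : A_i \to A_i'$ is injective for $i < n$, which is exactly the claim. Finally one notes that this computation is literally the restriction map $H^i(\GL_m(\F_q)) \to H^i(\GL_{n-1}(\F_q))$ under the identification with invariant subrings, so the theorem follows.

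The main point to get right — the only place any care is needed — is the identification of $H^*(\GL_n(\F_q))$ for $q \equiv 3 \pmod 4$ as the polynomial ring on classes of degrees $1, 2, \ldots, n$, together with the claim that under $j_n^*$ the degree-$j$ generator for $j \leq n-1$ maps to the corresponding generator downstairs while the top generators in degrees $n, \ldots, m$ map to zero; once that is in hand the degree count is immediate. (Alternatively, one can bypass any appeal to a presentation of $H^*(\GL_n(\F_q))$ and argue entirely on the $S_m$- versus $S_{n-1}$-invariant subrings of the polynomial rings $\ztwo[v_1,\ldots,v_m]$ and $\ztwo[v_1,\ldots,v_{n-1}]$: an $S_m$-invariant polynomial of degree $i < n$ that restricts to zero under $v_n = \cdots = v_m = 0$ must already be zero, because such a polynomial is a sum of monomials each of degree $< n$, hence each involving at most $n-1$ of the variables $v_1, \ldots, v_m$; by $S_m$-invariance, if any monomial survives the specialization then by permuting indices one sees the original polynomial is nonzero, giving a contradiction only if no monomial survives, i.e. the polynomial was zero.) The degree bound $i < n$ is sharp: $e_n$ itself lies in degree $n$ and restricts to zero.
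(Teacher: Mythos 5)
Your proof is correct and follows essentially the same route as the paper: reduce via detection to the $S_m$-invariant subring of $\ztwo[v_1,\ldots,v_m]$, observe that the specialization $v_l\mapsto v_l'$ for $l<n$ and $v_l\mapsto 0$ for $l\geq n$ has kernel $\langle e_n,\ldots,e_m\rangle$ by algebraic independence of elementary symmetric polynomials, and conclude by a degree count since every generator of that ideal lives in degree at least $n$. One small remark: you assert that $H^*(\GL_n(\F_q))$ is the \emph{full} polynomial ring $\ztwo[e_1,\ldots,e_n]$ on the elementary symmetric classes, which is true (it is the mod-$2$ cohomology of $O(n)$) but stronger than what is needed; the paper only invokes Quillen's injectivity into the $S_n$-invariant subring, and that suffices for the degree argument to go through. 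Your parenthetical alternative --- observing that any monomial of degree $< n$ involves at most $n-1$ distinct variables, so by $S_m$-invariance some representative survives the specialization --- is a genuinely more elementary variant, avoiding elementary symmetric polynomials altogether, and would be a reasonable replacement for the paper's appeal to algebraic independence.
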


\begin{proof}
	
From Theorem \ref{Quillen} we obtain that the map $\res_m:H^i(\GL_m(\F_q))\to H^i(D_m)^{S_m}$ is injective.	
Let $\alpha:H^i(\GL_m(\F_q))\to H^i(\GL_{n-1}(\F_q))$ denote the restriction map.
	
	We have $H^*(D_k)^{S_k}=\ztwo[v_1,\ldots,v_k]^{S_k}$. Consider the map 
	$$\beta_n^*: \ztwo[v_1,\ldots,v_m]^{S_m}\to \ztwo[v_1',\ldots,v_{n-1}']^{S_{n-1}},$$
	given by 
\[
\beta_{n}^*(v_i)=
\begin{cases}
v_i',&\text{if $i<n$},\\
0, &\text{if $i\geq n$}.
\end{cases}
\]	
Note that $\beta_n^*$ induces a map 
$$\beta_{ni}^*:H^i(D_m)^{S_m}\to H^i(D_{n-1})^{S_{n-1}}.$$
Consider the following commutative diagram.
\begin{center}
	\begin{tikzpicture}
		\node (A1) at (0,2) {$H^i(\GL_m(\F_q)) $};
		\node (A2) at (4,2) {$H^i(\GL_{n-1}(\F_q))$};
		\node (B1) at (0,0) {$H^i(D_m)^{S_m}$};
		\node (B2) at (4,0) {$H^i(D_{n-1})^{S_{n-1}}$};
		
		\draw[->](A1)to node [above]{$\alpha$} (A2);
		\draw[->](A1) to node [right]{$\res_m$} (B1);
		\draw[->](B1) to node [above]{$\beta_{ni}^*$} (B2);
		\draw[->](A2) to node [right]{$\res_{n-1}$} (B2);
	\end{tikzpicture}
\end{center}
Since the maps $\res_m$ and $\res_{n-1}$ are injective, it is enough to prove that the map $\beta_{ni}^*$ is injective. Let $f_i^k$ denote the elementary symmetric polynomial of degree $i$ in $k$ variables $v_1,\ldots, v_k$, where $k\geq i$.  Note that $\beta_{ni}^*(f_i^k)\neq 0$ for $1\leq i\leq n-1\leq k$. We know that the set of elementary symmetric functions form a basis for the algebra $\ztwo[v_1,\ldots,v_m]^{S_m}$. An element in $\ztwo[v_1,\ldots,v_m]^{S_m}$ of degree $i\leq n-1$ is of the form $P(f_1^m,\ldots,f_i^m)$, where $P$ is a polynomial (see \cite[Theorem $6.1$, page $191$]{lang}). We have $\beta_{ni}^*(P(f_1^m,\ldots,f_i^m))=P(f_1^{n-1},\ldots,f_i^{n-1})$. Therefore $\beta_n^*(P(f_1^m,\ldots,f_i^m))\neq 0$ as the elements $f_1^{n-1},\ldots,f_i^{n-1}$ are algebraically independent. So $\beta_{ni}^*$ is injective for $i\leq n-1$. 
\end{proof}

\section{Calculations of Some SWC}\label{cal}

\subsection{Calculations Using the Main Theorem}
Here we calculate small SWCs using the main result. 

\subsubsection{For $\mathbf{q\equiv 1\pmod 4}$}
From the main theorem we have for $q\equiv 1\pmod 4$,

$$w(\pi) = (1+\delta b_1) \prod_{i=1}^n\left(\prod_{p \in \X_i}(1 + p\cdot t) \right)^{c_{i}/2},$$

Here we calculate $w_2(\pi)$.
It is clear from the above expression that
\begin{align*}
	w_2(\pi) &= \sum\limits_{j=1}^n (c_j/2)\left(\sum\limits_{1 \leq  i_1 < i_2 < \cdots < i_j \leq n}(t_{i_1} + t_{i_2}+ \cdots + t_{i_j})\right)\\
	&=\frac{1}{2} \sum\limits_{j=1}^n c_j\binom{n-1}{j-1}\left(\sum\limits_{i=1}^n t_i\right)\\
	&=\frac{1}{2}\sum\limits_{j=1}^n c_j\binom{n-1}{j-1}a_2.
\end{align*}

Let $m ={}^t[c_0, c_1, \ldots, c_n] $ and $J= [0,\binom{n-1}{0},\binom{n-1}{1}, \ldots, \binom{n-1}{n-1}]$.
So we have 
\begin{equation} \label{eqjm} 
	Jm= \sum\limits_{j=1}^n \binom{n-1}{j-1} c_j.
\end{equation} 
Using Equation \eqref{amM} and Theorem \ref{thm1} we have $m = (1/2^n)Ma$. As in the proof of Theorem 1, let $R_i = (m_{i0},m_{i1}, \ldots , m_{in})$ be the $i$-th row of $M$ which satisfies $\sum\limits_{j=0}^n m_{ij}y^j= (1-y)^i(1+y)^{n-i}$. We calculate $JM$.

\begin{prop}\label{jm}
	We have $Jm=\dfrac{\chi_{\pi}(h_0)-\chi_{\pi}(h_1)}{2}$.
\end{prop}

\begin{proof}
	
	We identify a $1\times (n+1)$ vector $(m_{i0},\ldots,m_{in})$ with the polynomial $\sum\limits_{j=0}^n m_{ij}y^j= (1-y)^i(1+y)^{n-i}$. 	
	We compute
	\begin{align*}
		JM&= \sum_{i=1}^{n}\binom{n-1}{i-1}(1-y)^i(1+y)^{n-i} \quad \text{Put} \quad i-1= \alpha\\
		&=\sum_{\alpha=0}^{n-1}\binom{n-1}{\alpha}(1-y)^{\alpha+1}(1+y)^{n-1-\alpha}\\
		&=(1-y)\sum_{\alpha=0}^{n-1}\binom{n-1}{\alpha}(1-y)^{\alpha}(1+y)^{n-1-\alpha}\\
		&=(1-y)(1-y +1+y)^{n-1}\\
		&=2^{n-1}(1-y)\\
		&=[2^{n-1}, -2^{n-1}, 0 , \ldots, 0]. 
	\end{align*}
	Thus we have 
	\begin{align*}
		Jm&= (1/2^n)JMa\\
		&= (1/2^n)[2^{n-1}, -2^{n-1}, 0, \ldots , 0 ]{}^t[\chi_{\pi}(h_0),\chi_{\pi}(h_1), \ldots , \chi_{\pi}(h_n)]\\
		&= (1/2)(\chi_{\pi}(h_0) - \chi_{\pi}(h_1)).\\
	\end{align*}
\end{proof}

So for $q\equiv 1\pmod 4$ we have $$w_2(\pi)=\dfrac{\chi_{\pi}(h_0)-\chi_{\pi}(h_1)}{4}a_2.$$

\subsubsection{For $\mathbf{q\equiv 3 \mod 4}$}

\begin{theorem}
	Let $\pi$ be a real representation of $\GL_n(\F_q)$, where $q\equiv 3\pmod 4$. Then 
	$$w_1(\pi)=\dfrac{\chi_{\pi}(h_0)-\chi_{\pi}(h_1)}{2}\left(\sum_{i=1}^n v_i\right).$$
\end{theorem}

\begin{proof}
	We have 
	$$w(\pi)=\prod_{i=1}^{n}\left(\prod_{b\in\X_i}(1+v\cdot b)\right)^{c_i}.$$
	We write $v\cdot b$ for $\sum\limits_{i=1}^nv_ib_i$. In particular,
	$$w_1(\pi)=\sum_{i=1}^{n}c_i\binom{n-1}{i-1}\left(\sum_{i=1}^{n} v_i\right)=Jm\left(\sum_{i=1}^{n} v_i\right).$$
	Now the proof follows from Proposition \ref{jm}.
\end{proof}

Now we calculate $w_2(\pi)$. For $b\in \X_i$ let $E_b=(1+v\cdot b)^{c_i}$ and $P_i=\prod_{b\in \X_i}E_b$. Note that 
$$(1+v\cdot b)^{c_i}=\left(1+c_i\left(\sum_{s=1}^{i}v_{j_s}\right)+\binom{c_i}{2}\left(\sum_{s=1}^{i}v_{j_s}^2\right)+2\binom{c_i}{2}\left(\sum_{k<l}v_{j_k}v_{j_l}\right)+T_3\right),$$
where $\{j_1,\ldots, j_i\}$ are such that $b_{j_s}=1$ and $T_3$ denotes the terms with degree greater than or equal to $3$. Let 
$N_i$ denote the number of elements in $\X_i$ with $1$ in a fixed position. We have $N_i=\binom{n-1}{i-1}$. Put $N_i'=\binom{n-2}{i-2}$ which is the number of elements in $\X_i$ with $1$ in two fixed positions. Also consider $N_i''=\binom{n-2}{i-1}$ to be the number of elements in $\X_i$ with $1$ in one fixed position and $0$ in other fixed position. 

The coefficient of $\sum_{t=1}^{n} v_{t}^2$ in the symmetric polynomial $P_i$ is
$$N_i\binom{c_i}{2}+c_i^2\binom{N_i}{2}.$$

The coefficient of $\sum_{1\leq l<k\leq n} v_kv_l$ in $P_i$ is 
\begin{equation}
	2\binom{n-2}{i-2}\binom{c_i}{2}+c_i^2N_i'(N_i'-1) + c_i^2 \cdot 2  N_i' N_i'' + c_i^2{N_i''}^2
\end{equation}
We ignore the term $2\binom{n-2}{i-2}\binom{c_i}{2}$ since it is even.

We calculate
\begin{align*}
	c_i^2N_i'(N_i'-1) + c_i^2 \cdot 2  N_i' N_i'' + c_i^2{N_i''}^2&=c_i^2\left({N_i'}^2 + 2N_i' N_i''+ {N_i''}^2 - N_i'\right)\\
	&=c_i^2\left((N_i' + N_i'')^2-N_i'\right)\\
	&=c_i^2\left(N_i^2-N_i'\right),\,\, (\text{using identity} N_i' + N_i'' = N_i)\\
	& = c_i\left(N_i-N_i'\right) \pmod 2\\
	&= c_i N_i'' \pmod 2.\\
\end{align*}

Since $w(\pi) = \prod_i P_i$, we have 
\begin{equation}
	\begin{split}
		w_2(\pi)&=\left(\sum_{1\leq i<j\leq n}N_iN_jc_ic_j+\sum_{i=1}^{n}N_i\binom{c_i}{2}+\sum_{i=1}^{n}c_i^2\binom{N_i}{2}\right)\left(\sum_{t=1}^{n} v_t^2\right)\\
		&+\left(\sum_{i=1}^{n}c_i N_i''+2\sum\limits_{1 \leq i < j \leq n}N_iN_jc_ic_j\right)\cdot\left(\sum_{k<l}v_kv_l\right). 
	\end{split}
\end{equation}

The Vandermonde identity says 
$$\binom{r_1+\cdots+r_t}{s}=\sum_{k_1+\cdots+k_t=s}\binom{r_1}{k_1}\cdots\binom{r_t}{k_t}.$$
Using this identity we calculate
\begin{align*}
	\binom{\sum_{i=1}^{n}c_iN_i}{2}&=\sum_{i=1}^{n}\binom{c_iN_i}{2}+\sum_{1\leq l<k\leq n}c_lc_kN_lN_k\\
	&=\sum_{i=1}^{n}\left(\sum_{k_1+\cdots+k_{N_i}=2}\binom{c_i}{k_1}\cdots\binom{c_i}{k_{N_i}}\right)+\sum_{1\leq l<k\leq n}c_lc_kN_lN_k\\
	&=\sum_{i=1}^{n}N_i\binom{c_i}{2}+\sum_{i=1}^{n}c_i^2\binom{N_i}{2}+\sum_{1\leq l<k\leq n}c_lc_kN_lN_k.
\end{align*}

Thus we have 
\begin{equation}\label{w2exp}
	w_2(\pi) = \dbinom{\sum\limits_{i=1}^n c_iN_i}{2} \left(\sum_{l=1}^n v_l^2\right) + \left(\sum_{i=1}^n c_iN_i'' \right) \left(\sum\limits_{1 \leq l < k \leq n}v_lv_k\right). 
\end{equation}

Put $N= [N_0'' , N_1'', N_2'', \ldots , N_n''].
$
So we have $Nm= \sum\limits_{j=1}^n c_iN_i'' $. We calculate $NM$.

\begin{prop} \label{Nm} 
	We have $\sum_i c_iN_i''= Nm =\dfrac{\chi_{\pi}(h_0)-\chi_{\pi}(h_2)}{4}$.
\end{prop}

\begin{proof} 
	
	The proof is similar to Proposition \ref{jm}.
\end{proof}

\begin{theorem}
	Let $\pi$ be a real representation of $\GL_n(\F_q)$, where $q \equiv 3 \pmod 4$. Then $$w_2(\pi) = \dbinom{m_\pi}{2} \sum_i v_i^2. $$
\end{theorem}
\begin{proof}
	Observe that $\sum_{i=1}^n c_iN_i = \dfrac{\chi_{\pi}(h_0) - \chi_{\pi}(h_1)}{2} = m_\pi$ by equation \eqref{eqjm} and Proposition \ref{jm}.
	From Equation \eqref{w2exp} and Proposition \ref{Nm} we have
	
	$$w_2(\pi) = \dbinom{m_\pi}{2} \sum_l v_l^2 + \dfrac{\chi_{\pi}(h_0) - \chi_{\pi}(h_2)}{4} \cdot \sum_{1 \leq l < k \leq n} v_l v_k .$$ 
	Now from \cite{ganjo}[Lemma 4.4]{} it follows that the coefficient of $\sum_{1 \leq l < k \leq n} v_lv_k$ is even. Hence we get the result.
\end{proof}

\subsection{Calculations of SWC Using Detection Results}

Let $i_k : \GL_k(\F_q) \hookrightarrow \GL_n(\F_q)$, where $k<n$,
denote the injection of $\GL_k(\F_q)$ inside $\GL_n(\F_q)$ at the top left corner with rest diagonal entries $1$. When $q \equiv 1 \pmod 4 $ we have $i_1^*(a_2) = t_1$ and when $q \equiv 3 \pmod 4$ we have $i_2^*(\sum_{i=1}^n v_i^2)= v_1^2 + v_2^2$.

For a real representation $\pi$ of $\GL_1(\F_q)$, Theorem \ref{main} gives
\[
w(\pi)=\begin{cases} 
	(1+\delta s_1)(1+t_1)^{c_1/2}, &\text{ if } q\equiv 1 \pmod 4,\\
	(1+v_1)^{c_1}, & \text{ if } q \equiv 3 \pmod 4,
\end{cases}
\]
For a real representation $\pi$ of $\GL_2(\F_q)$, Theorem \ref{main} says
\[
w(\pi)=\begin{cases} 
	(1+\delta(s_1+s_2))((1+t_1)(1+t_2))^{c_1'/2}(1+t_1+t_2)^{c_2'/2}, &\text{ if } q\equiv 1 \pmod 4,\\
	(1+v_1)^{c_1'}(1+v_2)^{c_1'}(1+v_1+v_2)^{c_2'}, & \text{ if } q \equiv 3 \pmod 4.
\end{cases}
\]

\vskip 1mm

For the case of $\GL_1(\F_q)$	
we have $$ \begin{pmatrix}
	c_0\\
	c_1\\
\end{pmatrix}= (1/2)\begin{pmatrix}
	1 & 1\\
	1 & -1 \\
	
\end{pmatrix}\cdot \begin{pmatrix}
	\chi_{\pi}(h_0)\\
	\chi_{\pi}(h_1)\\
\end{pmatrix},$$
while for the case of $\GL_2(\F_q)$ we have	
$$ \begin{pmatrix}
	c_0'\\
	c_1'\\
	c_2'\\
\end{pmatrix}= (1/4)\begin{pmatrix}
	1 & 2 & 1\\
	1 & 0 & -1 \\
	1 & -2 & 1\\
\end{pmatrix}\cdot \begin{pmatrix}
	\chi_{\pi}(h_0)\\
	\chi_{\pi}(h_1)\\
	\chi_{\pi}(h_2)\\
\end{pmatrix}.$$

For $q\equiv 3\pmod 4$ we compute $w_{1}(\pi)$ in terms of character values.

\begin{theorem}\label{w1ofpi} 
	Let $\pi$ be a real representation of $\GL_n(\F_q)$, where $q\equiv 3\pmod 4$. Then we have
	$$w_1(\pi)=m_{\pi}\left(\sum_{i=1}^n v_i\right).$$
\end{theorem}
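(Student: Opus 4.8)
The plan is to extract $w_1(\pi)$ from the total Stiefel--Whitney class computed in the Main Theorem. Since $q\equiv 3\pmod 4$, part (2) of Theorem \ref{main} gives
$$w^{C_2^n}(\pi)=\prod_{i=1}^{n}\left(\prod_{l\in\X_i}(1+l\cdot v)\right)^{c_i},$$
and because $\res:H^*(\GL_n(\F_q))\hookrightarrow H^*(C_2^n)$ is injective, it suffices to identify the degree-one part of the right-hand side. The degree-one part of a product of terms of the form $(1+x)^{c}$ with $x$ homogeneous of degree one is simply $\left(\sum c\cdot x\right)$ over all the factors, i.e. $w_1(\pi)=\sum_{i=1}^n c_i\sum_{l\in\X_i}(l\cdot v)$ in $H^1(C_2^n)$. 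So the first step is to compute $\sum_{i=1}^n c_i\sum_{l\in\X_i}(l\cdot v)$ and show it equals $m_\pi\sum_{j=1}^n v_j$.

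Next I would exploit the $S_n$-symmetry. Each coefficient $c_i$ depends only on $i$, and for fixed $i$ the multiset $\{l\cdot v : l\in\X_i\}$ is $S_n$-invariant, so $\sum_{l\in\X_i}(l\cdot v)=\kappa_i\sum_{j=1}^n v_j$ for some integer $\kappa_i$; counting, each coordinate $v_j$ appears in exactly those $l\in\X_i$ with $l_j=1$, of which there are $\binom{n-1}{i-1}$, so $\kappa_i=\binom{n-1}{i-1}$. Hence $w_1(\pi)=\left(\sum_{i=1}^n c_i\binom{n-1}{i-1}\right)\sum_{j=1}^n v_j$, and the theorem reduces to the purely numerical identity
$$\sum_{i=1}^n c_i\binom{n-1}{i-1}=m_\pi=\frac{\dim\pi-\chi_\pi(h_1)}{2}.$$

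For this identity I would substitute the formula $c_i=\frac{1}{2^n}\sum_{k=0}^n\chi_{\sigma_k}(h_i)\chi_\pi(h_k)$ from Proposition \ref{pisigmak}, swap the order of summation, and evaluate the inner sum $\sum_{i=1}^n\binom{n-1}{i-1}\chi_{\sigma_k}(h_i)$. Using Proposition \ref{chi_i,k}, $\chi_{\sigma_i}(h_k)$ is the coefficient of $y^i$ in $(1-y)^k(1+y)^{n-k}$; equivalently $\chi_{\sigma_k}(h_i)$ is the coefficient of $y^k$ in $(1-y)^i(1+y)^{n-i}$. So the inner sum is the coefficient of $y^k$ in $\sum_{i=1}^n\binom{n-1}{i-1}(1-y)^i(1+y)^{n-i}$, and shifting the index and factoring out $(1-y)(1+y)^0$ turns this into $(1-y)\cdot\big((1-y)+(1+y)\big)^{n-1}=(1-y)2^{n-1}$ plus a correction for the missing $i=0$ term $\binom{n-1}{-1}=0$, which contributes nothing. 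Thus the inner sum equals the coefficient of $y^k$ in $2^{n-1}(1-y)$, namely $2^{n-1}$ if $k=0$, $-2^{n-1}$ if $k=1$, and $0$ otherwise. Plugging back, $\sum_i c_i\binom{n-1}{i-1}=\frac{1}{2^n}\big(2^{n-1}\chi_\pi(h_0)-2^{n-1}\chi_\pi(h_1)\big)=\frac{\dim\pi-\chi_\pi(h_1)}{2}=m_\pi$, as desired.

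The main obstacle is bookkeeping rather than conceptual: I need to be careful with the index shift between $\chi_{\sigma_i}(h_k)$ and $\chi_{\sigma_k}(h_i)$ (both are read off from Proposition \ref{chi_i,k} but with the roles of the two exponents exchanged), and with the fact that the sum in $w(\pi)$ runs over $1\le i\le n$ while the generating-function manipulation naturally wants $0\le i\le n$ — one must check that the $i=0$ term is harmless, which it is since $\binom{n-1}{-1}=0$. Once the generating-function identity $\sum_{i=0}^{n}\binom{n-1}{i-1}(1-y)^i(1+y)^{n-i}=2^{n-1}(1-y)$ is in hand, the rest is immediate.
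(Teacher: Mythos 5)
Your proof is correct, and it takes a genuinely different route from the paper's. The paper invokes Theorem~\ref{detectq3} (the stability/detection result) to reduce the computation of $w_1$ to a small rank, where the Main Theorem's product has only the two coefficients $c_1', c_2'$ (read off from the explicit $3\times 3$ supercharacter matrix), and then checks directly that $c_1' + c_2' = m_\pi$; the conclusion for general $n$ then follows because $w_1(\pi)$ is $S_n$-invariant and detection pins down the scalar. You instead work entirely inside $H^1(C_2^n)$ for general $n$: you extract the degree-one part of $\prod_i\bigl(\prod_{l\in\X_i}(1+l\cdot v)\bigr)^{c_i}$, use the symmetry count $\sum_{l\in\X_i}(l\cdot v)=\binom{n-1}{i-1}\sum_j v_j$ to reduce to the scalar identity $\sum_{i=1}^n c_i\binom{n-1}{i-1}=m_\pi$, and then prove that identity by a clean generating-function calculation, namely $\sum_{i=1}^n\binom{n-1}{i-1}(1-y)^i(1+y)^{n-i}=2^{n-1}(1-y)$, combined with Proposition~\ref{chi_i,k} and Proposition~\ref{pisigmak}. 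Your argument has the advantage of being self-contained (it does not require the detection theorem at all, which is proved separately and with some effort in Section~\ref{secdetect}), while the paper's version is shorter once that machinery is assumed. The index bookkeeping in your computation — in particular the swap between $\chi_{\sigma_i}(h_k)$ and $\chi_{\sigma_k}(h_i)$ and the harmless $i=0$ term — is handled correctly.
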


\begin{proof}
	
	Since $i=1$ in Theorem \ref{detectq3}, we have $n = 2$. We calculate for $\GL_1(\F_q)$. So for the representation $\pi$ we have 
	
	$$w_1(\pi) = c_1v_1 .$$
	Since $c_1= m_\pi$, we have the result.
	
	%
	
\end{proof}

Next we compute the second SWC.

\begin{theorem}\label{w2ofpi}
	Let $\pi$ be a real representation of $\GL_n(\F_q)$, where $q$ is odd. Then we have
	\[
	w_2(\pi)=
	\begin{cases}
		\dfrac{m_{\pi}}{2}a_2,&\text{if}\,\, q\equiv 1\pmod 4\\ \\
		\dbinom{m_{\pi}}{2}\left(\sum\limits_{i=1}^n v_i^2\right), &\text{if}\,\, q\equiv 3\pmod 4.
	\end{cases}
	\]
\end{theorem}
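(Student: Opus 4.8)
The plan is to use the detection theorems to reduce the computation of $w_2(\pi)$ to the smallest $\GL_k(\F_q)$ that still sees degree-$2$ cohomology, and then read off the answer from the formula in Theorem \ref{main}. For $q\equiv 1\pmod 4$, Theorem \ref{detectq1} (with $i=2$) says the restriction $H^2(\GL_m(\F_q))\to H^2(\GL_{n-1}(\F_q))$ is injective whenever $2<2n-1$, i.e. for all $n\geq 2$; in particular the restriction $i_1^*:H^2(\GL_n(\F_q))\to H^2(\GL_1(\F_q))$ is injective, so it suffices to compute $w_2(i_1^*\pi)=w_2(\pi\mid_{\GL_1(\F_q)})$ and then observe that $w_2(\pi)$ is the unique class restricting to it. Since $i_1^*(a_2)=t_1$, the degree-$2$ part of $H^2(\GL_n(\F_q))$ is spanned by $a_2$, and we only need to match coefficients.

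The first step is therefore to apply Theorem \ref{main}(1) to $\pi\mid_{\GL_1(\F_q)}$: from the displayed formula for $\GL_1$, $w(\pi)=(1+\delta s_1)(1+t_1)^{c_1/2}$, whose degree-$2$ part is $\binom{c_1/2}{1}t_1=\tfrac{c_1}{2}t_1$ (the $\delta s_1$ term contributes only in degree $1$). Here $c_1$ is computed for $\GL_1$ via the $2\times 2$ matrix displayed above: $c_1=\tfrac12(\chi_\pi(h_0)-\chi_\pi(h_1))=\tfrac{\dim\pi-\chi_\pi(h_1)}{2}=m_\pi$. Hence $w_2(i_1^*\pi)=\tfrac{m_\pi}{2}t_1=i_1^*\!\left(\tfrac{m_\pi}{2}a_2\right)$, and injectivity of $i_1^*$ on $H^2$ forces $w_2(\pi)=\tfrac{m_\pi}{2}a_2$. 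One should check the bookkeeping that $c_1$ for the restriction to $\GL_1$ indeed equals $m_\pi$ as defined for $\GL_n$: since $m_\pi=\tfrac{\dim\pi-\chi_\pi(h_1)}{2}$ and $\chi_\pi(h_1)$ is the character of $\pi$ at $h_1=\diag(-1,1,\dots,1)$, which is the same group element regardless of the ambient $\GL_k$, this is immediate.

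For $q\equiv 3\pmod 4$ the argument is parallel but uses Theorem \ref{detectq3} (with $i=2$), giving injectivity of $H^2(\GL_m(\F_q))\to H^2(\GL_{n-1}(\F_q))$ for $2<n$, hence for the inclusion $i_2:\GL_2(\F_q)\hookrightarrow\GL_n(\F_q)$ on $H^2$. By Theorem \ref{main}(2) applied to $\GL_2$, $w(\pi\mid_{\GL_2})=(1+v_1)^{c_1'}(1+v_2)^{c_1'}(1+v_1+v_2)^{c_2'}$; extracting the degree-$2$ part gives $\binom{c_1'}{2}(v_1^2+v_2^2)+\binom{c_2'}{2}(v_1+v_2)^2+c_1'c_2'(v_1^2+v_1v_2+v_1v_2+v_2^2)$, which over $\ztwo$ simplifies using $(v_1+v_2)^2=v_1^2+v_2^2$ and $2v_1v_2=0$ to $\left[\binom{c_1'}{2}+\binom{c_2'}{2}+c_1'c_2'\right](v_1^2+v_2^2)=\binom{c_1'+c_2'}{2}(v_1^2+v_2^2)=\binom{m_\pi}{2}(v_1^2+v_2^2)$, using $c_1'+c_2'=m_\pi$ and the Vandermonde-type identity $\binom{a+b}{2}\equiv\binom a2+\binom b2+ab\pmod 2$. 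Since $i_2^*\!\left(\sum_{i=1}^n v_i^2\right)=v_1^2+v_2^2$ and $\sum v_i^2$ is (the relevant part of) a nonzero class detected injectively, we conclude $w_2(\pi)=\binom{m_\pi}{2}\sum_{i=1}^n v_i^2$.

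The main obstacle is purely organizational rather than deep: one must be careful that the ``$c_i$'' appearing in the small-rank formulas are computed with respect to the \emph{same} element-values $\chi_\pi(h_j)$ as in the ambient group (so that $c_1=m_\pi$ for $\GL_1$ and $c_1'+c_2'=m_\pi$ for $\GL_2$), and that the detection maps $i_1^*,i_2^*$ really do land the generator $a_2$ (resp. $\sum v_i^2$) onto the computed target class, so that injectivity on $H^2$ can be invoked. Once these identifications are pinned down, the binomial-coefficient manipulation mod $2$ is the only computation, and it is routine.
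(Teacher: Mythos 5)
Your argument for $q\equiv 1\pmod 4$ matches the paper's: restrict to $\GL_1$, read off the degree-$2$ coefficient $c_1/2=m_\pi/2$, and pull back along the injective restriction to get $\tfrac{m_\pi}{2}a_2$. That part is correct.

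The $q\equiv 3\pmod 4$ case has a genuine gap. When you expand the degree-$2$ part of $(1+v_1)^{c_1'}(1+v_2)^{c_1'}(1+v_1+v_2)^{c_2'}$ you list the self-squares of each factor and the cross terms of each $(1+v_i)^{c_1'}$ with $(1+v_1+v_2)^{c_2'}$, but you omit the cross term between $(1+v_1)^{c_1'}$ and $(1+v_2)^{c_1'}$, which contributes $c_1'v_1\cdot c_1'v_2=c_1'^{\,2}v_1v_2$. The correct degree-$2$ part is therefore
\[
\binom{c_1'+c_2'}{2}(v_1^2+v_2^2)+c_1'^{\,2}\,v_1v_2,
\]
and since $c_1'^{\,2}\equiv c_1'\pmod 2$ the extra term does not vanish automatically. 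It also cannot be absorbed into $v_1^2+v_2^2$ because $v_1v_2$ and $v_1^2+v_2^2$ are linearly independent in $H^2(C_2^2)$, and both are restrictions of $S_n$-invariant classes from $C_2^n$, so $S_n$-invariance alone does not rule out a $v_1v_2$-term in the image of $H^2(\GL_n(\F_q))$. The missing input, which the paper supplies by citing a lemma showing $c_1'$ is even, is exactly what kills $c_1'^{\,2}v_1v_2$. Without that step your simplification to $\binom{m_\pi}{2}(v_1^2+v_2^2)$ is not justified, and the proof for $q\equiv 3\pmod 4$ is incomplete.
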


\begin{proof}
	For a real representation $\pi$ of $\GL_1(\F_q)$, where $q\equiv 1\pmod 4$, we have
	
	$$ w_2(\pi) = \frac{c_1}{2} t_1. $$

	We have $c_1 = (1/2)(\chi_{\pi}(h_0) - \chi_{\pi}(h_1)) = m_\pi.$ 
	Thus by Theorem \ref{detectq1} we have $$w_2(\pi) = \frac{m_\pi}{2}a_2.$$

	For a real representation $\pi$ of $\GL_2(\F_q)$ and $q \equiv 3 \pmod{4}$ we have 
	$$w_2(\pi) = \binom{c_1' + c_2'}{2}(v_1^2 + v_2^2 ) + c_1'^2 v_1v_2.$$
	We know from \cite[Lemma $4.4$]{ganjo}) that $c_1'$ is even. 
Moreover we have 
	$	c_1' +c_2' = (1/2)(\chi_{\pi}(h_0)-\chi_{\pi}(h_1))=m_\pi$.
Hence the Theorem follows from Theorem \ref{detectq3}.	
\end{proof}

Using the detection result we compute the fourth SWC in terms of character values when $q\equiv 1\pmod 4$.

\begin{theorem}\label{w4}
	Let $\pi$ be a real representation of $\GL_n(\F_q)$, where $q\equiv 1\pmod 4$. Then 
	$$w_4(\pi)=\binom{m_{\pi}/2}{2}\sum_{i=1}^nt_i^2+\frac{\dim \pi-\chi_{\pi}(h_2)}{8}\sum_{1\leq i<j\leq n}t_it_j.$$
\end{theorem}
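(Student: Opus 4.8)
The strategy is to reduce the computation of $w_4(\pi)$, a class in $H^4(\GL_n(\F_q))$, to a computation inside $H^4(\GL_2(\F_q))$ by invoking the detection result Theorem \ref{detectq1}. Indeed, taking $i=4$ in that theorem (so that $4 < 2n-1$ forces $n \geq 3$, and one checks $n=2$ separately or notes the formula is vacuous/stable there), the restriction map $\res: H^4(\GL_n(\F_q)) \to H^4(\GL_2(\F_q))$ along $i_2$ is injective; more precisely, using the explicit description of $\res$ on generators ($a_2 \mapsto t_1 + t_2$, higher $a_{2l}$ and $b_{2l-1}$ killed appropriately, see the maps $j_n^*$ in Section \ref{secdetect}), it suffices to determine $i_2^*(w_4(\pi)) = w_4(\pi\mid_{\GL_2(\F_q)})$ and then lift. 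So the real content is: compute $w_4$ for a real representation of $\GL_2(\F_q)$ with $q \equiv 1 \pmod 4$, and express the answer in $\GL_n$-invariants.

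First I would apply the Main Theorem \ref{main}(1) in the case $n=2$: writing $\pi\mid_D$ in terms of $\sigma_0, \sigma_1, \sigma_2$ with multiplicities governed by $c_0', c_1', c_2'$ as in the $\GL_2$ matrix formula in Section \ref{cal}, we get
$$w^D(\pi) = (1 + \delta(s_1+s_2))\bigl((1+t_1)(1+t_2)\bigr)^{c_1'/2}(1+t_1+t_2)^{c_2'/2}.$$
Since $w_4$ lies in the subring $\ztwo[a_2, a_4] = \ztwo[t_1+t_2, t_1 t_2]$ (the $s_i$-part contributes only $\delta b_1$ in degree $1$, by Proposition \ref{polysi}, and cannot reach degree $4$ by parity — the only odd generator surviving is $b_1$, and $b_1^4$ would need the product with an $a$, but degree bookkeeping and Proposition \ref{polysi} rule out pure-$s$ degree-$4$ terms; I would state this cleanly), the degree-$4$ part comes entirely from expanding the two binomial factors. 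Extracting the degree-$4$ coefficient of $\bigl((1+t_1)(1+t_2)\bigr)^{c_1'/2}(1+t_1+t_2)^{c_2'/2}$ over $\ztwo$ using Lucas' theorem on the relevant binomial coefficients $\binom{c_1'/2}{2}$, $\binom{c_2'/2}{2}$, and the mixed terms, I would assemble the coefficient of $t_1^2 = \frac12(\,\cdot\,)$-type symmetric combinations $\sum t_i^2$ and $\sum_{i<j} t_i t_j$. Then I convert: $\sum_{i=1}^n t_i^2$ is the restriction of $a_2^2 + 0\cdot(\dots)$ — actually over $\ztwo$, $a_2^2 = (\sum t_i)^2 = \sum t_i^2$, so $\sum t_i^2 = a_2^2$; and $\sum_{i<j} t_i t_j = a_4$. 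Back in $\GL_2$, $a_2^2 \mapsto (t_1+t_2)^2 = t_1^2 + t_2^2$ and $a_4 \mapsto t_1 t_2$, matching the claimed formula after identifying the scalar coefficients.

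The remaining task is the \textbf{arithmetic identification of the coefficients}: I must show the coefficient of $\sum t_i^2$ equals $\binom{m_\pi/2}{2}$ and the coefficient of $\sum_{i<j}t_i t_j$ equals $\frac{\dim\pi - \chi_\pi(h_2)}{8}$. For the first, note $c_1' = m_\pi - c_2'$ after using $c_1' + c_2' = m_\pi$ (shown in the proof of Theorem \ref{w2ofpi}), and $c_1'$ is even (\cite[Lemma 4.4]{ganjo}); I expect a short mod-$2$ manipulation with Lucas to collapse the $t_i^2$-coefficient to $\binom{m_\pi/2}{2}$. For the second, I compute $\frac14(\chi_\pi(h_0) + \text{stuff})$: from the $\GL_2$ matrix, $c_2' = \frac14(\chi_\pi(h_0) - 2\chi_\pi(h_1) + \chi_\pi(h_2))$ and combine with $\dim\pi = \chi_\pi(h_0)$ and the definition $m_\pi = \frac{\dim\pi - \chi_\pi(h_1)}{2}$ to see $\frac{\dim\pi - \chi_\pi(h_2)}{8} = \frac{c_1'}{4} = \frac{m_\pi - c_2'}{4}$, then check this is exactly the $a_4$-coefficient produced by the binomial expansion. \textbf{The main obstacle} will be this bookkeeping over $\ztwo$: keeping straight which binomial coefficients of the exponents $c_1'/2, c_2'/2$ are odd via Lucas, handling the cross term between the two factors correctly, and verifying the integrality/parity facts ($c_1'$ even, $8 \mid \dim\pi - \chi_\pi(h_2)$) so that all the displayed fractions make sense as integers mod $2$ — the rest is a direct consequence of Theorem \ref{main} and Theorem \ref{detectq1}.
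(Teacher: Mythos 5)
Your approach is the same as the paper's: use Theorem \ref{detectq1} (for $n\geq 3$; the case $n=2$ is the $\GL_2$ formula itself) to reduce to $\GL_2(\F_q)$, apply the Main Theorem to get $w^{\GL_2}(\pi)=(1+\delta(s_1+s_2))\bigl((1+t_1)(1+t_2)\bigr)^{c_1'/2}(1+t_1+t_2)^{c_2'/2}$, extract the degree-$4$ part (which is purely a $t$-polynomial since the $t$-factor has only even degrees, so the $\delta b_1$ term cannot pair up to degree $4$), and identify coefficients. One small detour you propose that the paper avoids: rather than invoking Lucas' theorem, the paper simply uses the exact integer identity $\binom{a}{2}+\binom{b}{2}+ab=\binom{a+b}{2}$ with $a=c_1'/2$, $b=c_2'/2$, $a+b=m_\pi/2$ to collapse the coefficient of $t_1^2+t_2^2$ to $\binom{m_\pi/2}{2}$ in one line; Lucas would work too but is unnecessary. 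Also note a factor-of-$2$ slip in your arithmetic for the cross term: from the $\GL_2$ supercharacter matrix one has $c_1'=\tfrac{1}{4}(\chi_\pi(h_0)-\chi_\pi(h_2))=\tfrac{\dim\pi-\chi_\pi(h_2)}{4}$, and the $t_1t_2$-coefficient in the expansion is $(c_1'/2)^2\equiv c_1'/2\pmod 2$, so the correct identification is $\tfrac{\dim\pi-\chi_\pi(h_2)}{8}=\tfrac{c_1'}{2}$, not $\tfrac{c_1'}{4}$ as you wrote; the rest of your bookkeeping goes through once this is corrected.
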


\begin{proof}
	From Theorem \ref{detectq1}
	we know that the map $\res:H^4(\GL_n(\F_q))\to H^4(\GL_2(\F_q))$ is injective.  We have
	$$w^{\GL_2(\F_q)}(\pi)=(1+\delta(s_1+s_2))\left((1+t_1)(1+t_2)\right)^{c_1'/2}(1+t_1+t_2)^{c_2'/2}.$$
	Then we calculate
	\begin{align*}
		\res(w_4(\pi))&=\left(\binom{c_1'/2}{2}+\binom{c_2'/2}{2}+\frac{c_1'c_2'}{4}\right)(t_1^2+t_2^2)+\frac{c_1'}{2}t_1t_2\\
		&=\binom{\frac{c_1'}{2}+\frac{c_2'}{2}}{2}(t_1^2+t_2^2)+\frac{c_1'}{2}t_1t_2\\
		&=\binom{m_{\pi}/2}{2}(t_1^2+t_2^2)+\frac{\dim \pi-\chi_{\pi}(h_2)}{8}t_1t_2.
	\end{align*}
	
	So we have 
	$$w_4(\pi)=\binom{m_{\pi}/2}{2}\sum_{i=1}^nt_i^2+\frac{\dim \pi-\chi_{\pi}(h_2)}{8}\sum_{1\leq i<j\leq n}t_it_j.
	$$
\end{proof}

\section{Some Applications}\label{appl}

\subsection{Lifting Criteria for Real Representations of $\GL_n(\F_q)$}

A real representation $\pi$ of a finite group $G$ is called spinorial provided it lifts to the $\mathrm{Pin}$ group, which is a double cover of the orthogonal group. One knows that $\pi$ is spinorial iff $w_2(\pi)+w_1^2(\pi)=0$ (see \cite[page $328$]{guna}).

		\begin{theorem}\label{lift}
			Let $\pi$ be a real representation of $\GL_n(\F_q)$, where $q$ is odd. Then  
			\begin{enumerate}
				\item 
				for $q\equiv 1\pmod 4$, we have $\pi$ is spinorial iff $m_{\pi}\equiv 0\pmod 4$,
				\item
				for $q\equiv 3\pmod 4$, we have $\pi$ is spinorial iff $m_{\pi}\equiv 0$ or $3\pmod 4$.
			\end{enumerate}
		\end{theorem}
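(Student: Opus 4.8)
The plan is to reduce the spinoriality criterion $w_2(\pi) + w_1(\pi)^2 = 0$ to the explicit formulas for $w_1$ and $w_2$ already obtained, and then analyze the resulting congruences on $m_\pi$. Since spinoriality is detected after restriction to a subgroup whose cohomology injects (the diagonal torus $D$ for $q \equiv 1 \pmod 4$, or $C_2^n$ for $q \equiv 3 \pmod 4$), it suffices to test the vanishing of $w_2(\pi) + w_1(\pi)^2$ in $H^*(D)$ respectively $H^*(C_2^n)$.

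For $q \equiv 1 \pmod 4$: by the main theorem $w_1^D(\pi) = \delta b_1$, and since $b_1 \in H^1(D)$ with $b_1 = \sum_i s_i$ we get $w_1^D(\pi)^2 = \delta^2 b_1^2 = \delta \, b_1^2$. But $b_1^2 = \sum_i s_i^2 + 2\sum_{i<j}s_is_j = 0$ in $H^*(D) = \ztwo[s_i,t_i]/(s_i^2)$ since $s_i^2 = 0$ and the cross terms vanish mod $2$. Hence $w_1^D(\pi)^2 = 0$, and the spinoriality condition becomes simply $w_2^D(\pi) = 0$. By Theorem \ref{w2ofpi}, $w_2^D(\pi) = \tfrac{m_\pi}{2}a_2$, and since $a_2 = \sum_{i<j} t_it_j \neq 0$ in $H^*(D)$ (for $n \geq 2$; the $n=1$ case being checked directly where $a_2$ does not appear and $m_\pi$ must be even already for $c_1/2$ to make sense), this vanishes iff $\tfrac{m_\pi}{2}$ is even, i.e. iff $m_\pi \equiv 0 \pmod 4$. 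One must also note that $m_\pi$ is automatically even (the representation being real forces the relevant multiplicity parities), which is implicit in the formula $w_2(\pi) = \tfrac{m_\pi}{2}a_2$ having integer coefficients; this should be stated cleanly.

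For $q \equiv 3 \pmod 4$: here $w_1^{C_2^n}(\pi) = m_\pi \sum_i v_i$ by Theorem \ref{w1ofpi}, so $w_1^{C_2^n}(\pi)^2 = m_\pi^2 \sum_i v_i^2$ (cross terms $2 v_iv_j$ vanish mod $2$). By Theorem \ref{w2ofpi}, $w_2^{C_2^n}(\pi) = \binom{m_\pi}{2}\sum_i v_i^2$. Therefore $w_2(\pi) + w_1(\pi)^2 = \left(\binom{m_\pi}{2} + m_\pi^2\right)\sum_i v_i^2$ in $H^*(C_2^n) = \ztwo[v_1,\ldots,v_n]$, and since $\sum_i v_i^2 \neq 0$ this vanishes iff $\binom{m_\pi}{2} + m_\pi^2 \equiv 0 \pmod 2$. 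Writing $\binom{m_\pi}{2} = \tfrac{m_\pi(m_\pi-1)}{2}$, a direct check of $m_\pi \bmod 4$ shows $\binom{m_\pi}{2} + m_\pi^2$ is even precisely when $m_\pi \equiv 0$ or $3 \pmod 4$: indeed for $m_\pi \equiv 0$ both terms are even; for $m_\pi \equiv 1$, $\binom{m_\pi}{2}$ is even and $m_\pi^2$ odd; for $m_\pi \equiv 2$, $\binom{m_\pi}{2}$ is odd and $m_\pi^2$ even; for $m_\pi \equiv 3$, $\binom{m_\pi}{2}$ is odd and $m_\pi^2$ odd, summing to even.

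The main obstacle is not the arithmetic — that is a finite check modulo $4$ — but making rigorous the claim that the relevant classes $a_2$ (for $q \equiv 1 \pmod 4$) and $\sum_i v_i^2$ (for $q \equiv 3 \pmod 4$) are genuinely nonzero in the detecting cohomology, so that the coefficient congruences are both necessary and sufficient; this requires $n \geq 2$, and the small cases $n = 1$ (and for the $q\equiv 1$ case the fact that $m_\pi$ is necessarily even, so that $\tfrac{m_\pi}{2}$ is a well-defined integer whose parity we are testing) must be handled separately but are immediate from the $\GL_1$ and $\GL_2$ formulas displayed in Section \ref{cal}. Once that is in place, the theorem follows by combining the displayed $w_1$ and $w_2$ formulas with the identity $s_i^2 = 0$ (resp. $2v_iv_j = 0$) to kill all cross terms.
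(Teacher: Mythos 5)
Your proof follows essentially the same strategy as the paper: reduce the spinoriality condition $w_2(\pi)+w_1(\pi)^2=0$ to an explicit coefficient computation in the cohomology of a detecting subgroup, then do the finite arithmetic mod $4$. The only real difference is presentational: the paper restricts to $\GL_1(\F_q)$ (for $q\equiv 1\pmod 4$) or $\GL_2(\F_q)$ (for $q\equiv 3 \pmod 4$) via Theorems \ref{detectq1} and \ref{detectq3}, whereas you work directly in $H^*(D)$ or $H^*(C_2^n)$ via Quillen's detection, which is equally valid.

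One small error to fix: you wrote $a_2=\sum_{i<j}t_it_j$, but from equation \eqref{cohomGLn} we have $a_{2l}=\sum_{i_1<\cdots<i_l}t_{i_1}\cdots t_{i_l}$, so taking $l=1$ gives $a_2=\sum_{i=1}^n t_i$; the element $\sum_{i<j}t_it_j$ is $a_4$. In particular your worry about the $n=1$ case is unfounded: $a_2\mid_D = t_1 \neq 0$ for $n=1$ as well, so no separate check is needed and the coefficient condition $\tfrac{m_\pi}{2}\equiv 0\pmod 2$ is necessary and sufficient for every $n\geq 1$. Your remarks that $b_1^2=0$ in $H^*(D)$ (killing $w_1^2$ when $q\equiv1\pmod4$) and that $m_\pi$ is automatically even in this case are both correct and worth making explicit; the arithmetic for $q\equiv 3\pmod 4$ is also correct, and $\binom{m_\pi}{2}+m_\pi^2\equiv \tfrac{m_\pi(m_\pi+1)}{2}\pmod 2$, matching the expression in the paper's proof.
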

		
		\begin{proof}
			We know that $\pi$ is spinorial iff $w_2(\pi)+w_1^2(\pi)=0$. From Theorems \ref{detectq1} and \ref{detectq3} we deduce that $\pi$ is spinorial iff $res(w_2(\pi)+w_1^2(\pi))=0$, where $\res:H^*(\GL_n(\F_q),\ztwo)\to H^*(\GL_k(\F_q),\ztwo)$ denotes the restriction map, where $k=1$ for $q \equiv 1 \pmod 4$ and $k=2$ for $q \equiv 3 \pmod 4$. 
			
			
			
			From Theorems \ref{w1ofpi} and \ref{w2ofpi}, it follows that for a real representation $\pi$ of $\GL_n(\F_q)$, we have 
			\[ 
			\res(w_2(\pi) + w_1^2(\pi))=
			\begin{cases}
				\dfrac{m_\pi}{2}t_1,  &\text{ if } q \equiv 1 \pmod 4,\\
				\dfrac{m_{\pi}(m_\pi + 1)}{2} (v_1^2+v_2^2), & \text{ if } q \equiv 3 \pmod 4.
			\end{cases}
			\]

		\end{proof}

\subsection{Non-Detection by An-isotropic Torus}
Let $a \in \F_{q^n}^{\times}$ and $G=\GL_n(\F_q)$, where $q$ is odd. We fix a basis of $\F_{q^n}$ over $\F_q$. Then the map $x \to a\cdot x$ gives a linear map $M_a \in G  $. Then $M=\{M_a, a \in \F_{q^n}^{\times} \}$ is a cyclic group of order $q^n-1$ and is called an anisotropic torus. 
\begin{theorem}\label{app1}
	The anisotropic torus $M$ does not detect the mod-2 cohomology of $G$. 	
\end{theorem}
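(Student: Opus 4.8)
The plan is to exploit the detection results of Section \ref{secdetect} together with the explicit formula for the total Stiefel--Whitney class given by Theorem \ref{main}. The anisotropic torus $M$ is a cyclic group of order $q^n-1$; its $2$-Sylow subgroup is cyclic, say of order $2^a$ with $a = v_2(q^n-1)$, and since $q$ is odd we have $a \geq 1$. Restriction to a $p$-group of a given prime detects only the $p$-primary part of cohomology, so $M$ detects the mod-$2$ cohomology of $G$ if and only if its $2$-Sylow subgroup $C_{2^a}$ does. Hence it suffices to show that the restriction map $\res\colon H^*(\GL_n(\F_q),\ztwo) \to H^*(C_{2^a},\ztwo)$ is \emph{not} injective.

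The key structural point is that $H^*(C_{2^a},\ztwo)$ has very small growth: it is $\ztwo[u]$ with $\deg u = 1$ when $a = 1$, and $\ztwo[x]\otimes E[y]$ with $\deg x = 2$, $\deg y = 1$ when $a \geq 2$; in either case it has total dimension at most $2$ in each cohomological degree. On the other hand, by Theorem \ref{Quillen} (Quillen) the ring $H^*(\GL_n(\F_q),\ztwo)$ injects into $H^*(D)^{S_n}$, and from the explicit description \eqref{cohomGLn} one sees that $H^*(\GL_n(\F_q),\ztwo)$ has Krull dimension $n$ (the polynomial generators $a_2,\ldots,a_{2n}$, equivalently the elementary symmetric functions in the $t_i$, are algebraically independent). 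For $n \geq 2$ a ring of Krull dimension $n \geq 2$ cannot inject into $H^*(C_{2^a},\ztwo)$, which has Krull dimension $1$: the composite $\ztwo[a_2,a_4] \hookrightarrow H^*(\GL_n(\F_q),\ztwo) \xrightarrow{\res} H^*(C_{2^a},\ztwo)$ would be an injection of a dimension-$2$ polynomial ring into a dimension-$1$ ring, which is impossible. For $n=1$ one argues directly: $\GL_1(\F_q) = C_{q-1}$ and $M = C_{q-1}$ as well, but a genuinely anisotropic torus requires $n \geq 2$ (or one notes that already $H^4$ is nonzero while the image can be controlled), so the interesting content is $n \geq 2$.

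More concretely, and avoiding Krull-dimension language if a self-contained argument is preferred, I would produce an explicit nonzero class in the kernel. Restriction $H^*(\GL_n(\F_q)) \to H^*(M)$ factors through $H^*(C_{2^a})$, and one can compute the image of the generators $a_{2i}$ by restricting first to a diagonalization of $M$ over $\F_{q^n}$: the element $M_a$ of order $2^a$ has eigenvalues the Galois conjugates of a primitive $2^a$-th root of unity, and tracking these through the Chern/Stiefel--Whitney formalism (as in Lemma \ref{lt} and the computation $\res(t_i) = v_i^2$ in the proof of Theorem \ref{main}) shows that $\res(a_{2i})$ lands in the subring generated by a single degree-$2$ class, say $\res(a_2)$. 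Consequently $\res(a_4) - (\text{polynomial in }\res(a_2))$, or more simply the class $a_4 - \lambda a_2^2$ for a suitable $\lambda \in \ztwo$ chosen to kill the leading term, is a nonzero element of $H^4(\GL_n(\F_q),\ztwo)$ lying in $\ker(\res)$ for $n \geq 2$. The main obstacle is the bookkeeping in this last step: one must pin down exactly which power of the degree-$2$ generator each $\res(a_{2i})$ equals, which amounts to understanding how the Galois orbit of a $2^a$-th root of unity sits inside $\overline{\F_q}^\times$ and hence how the multiset of eigenvalues of $M_a$ distributes among square classes. Once that combinatorial input is in hand, the non-injectivity, and therefore the theorem, follows immediately by a dimension count in $H^4$.
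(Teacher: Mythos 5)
Your primary (Krull-dimension) argument is correct in spirit and takes a genuinely different route from the paper. The paper's proof is constructive: it takes $\pi = (\chi^j\circ\det)_\R$ with $j$ odd, computes $m_\pi = 2$, invokes Theorem \ref{w2ofpi} to get $w_2(\pi)\neq 0$, and then shows directly that $w_2(\pi|_M) = \kappa(c_1(\chi^j\circ\det|_M)) = 0$ because the exponent $(q^n-1)/(q-1)$ appearing in the norm is even (the paper writes it as $q+1$, i.e.\ in the $n=2$ case). So the paper finds an explicit degree-$2$ class of $H^*(G)$ killed by restriction, and stays entirely within the SW-class machinery the paper has already built. Your approach is more structural: compare Krull dimensions (or Poincar\'e series growth) of $H^*(\GL_n(\F_q),\ztwo)$ and $H^*(C_{2^a},\ztwo)$, concluding non-injectivity for $n\geq 2$. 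This is shorter, avoids any representation-theoretic computation, and is uniform in $n$, but it tells you less about \emph{where} detection fails.

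Two issues to flag. First, for $q\equiv 3\pmod 4$ your Krull-dimension claim leans on the presentation \eqref{cohomGLn}, but that presentation is stated in the paper only for $q\equiv 1\pmod 4$; to get Krull dimension $n$ in the other congruence class you need Quillen's stratification theorem (Krull dimension equals $2$-rank, and $C_2^n\subset D$ shows the $2$-rank is $n$) or the full Quillen computation of $H^*(\GL_n(\F_q),\ztwo)$, neither of which is in this paper. Second, the ``bookkeeping obstacle'' you describe for the concrete version of your argument is not actually an obstacle: since $H^2(M,\ztwo)\cong\ztwo$, $H^4(M,\ztwo)\cong\ztwo$, and the square of the degree-$2$ generator spans $H^4$, at least one of $a_2$, $a_4$, $a_4+a_2^2$ must lie in $\ker(\res^G_M)$ no matter what the images turn out to be, and each of these is nonzero in $H^*(\GL_n(\F_q))$ for $n\geq 2$. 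No analysis of how the eigenvalues of $M_a$ distribute among square classes is required.
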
 
\begin{proof}
	Consider the representation $\pi = (\chi)_{\R}$, where $\chi=\chi^j\circ\det$ and $j$ is odd. 
One has 
\begin{align*}
\chi_{\pi}(h_1)&=2\chi_j(\det(h_1))\\
&=2\chi_j(-1)\\
&=2(-1)^j.
\end{align*}
Since $j$ is odd we have $\chi_{\pi}(h_1)=-2$. This gives 
$$m_{\pi}=\frac{1}{2}(2+2)=2.$$
	Then from Theorem \ref{w2ofpi} one obtains 
	$w_2(\pi)\neq 0.$
	
	Let $\hat g$ be a generator of $M$. Now we have
	
	$$w_2(\pi\mid_M)= \kappa(c_1(\chi^j \circ \det\mid_M)).$$
	We have
	$$\chi(\hat{g})=\chi^j \circ \det (\hat{g})=\zeta_{q-1}^j=\zeta_{q^2-1}^{(q+1)j}.$$ 
	Take $u=c_1(\dot{\chi})\in H^2(M,\Z)$ where $\dot{\chi} \in \mathrm{Hom}(M, \C^{\times})$ such that $\dot{\chi}(\hat{g}) = \zeta_{q^2-1}$. As a result we get 
	\begin{align*}
		\kappa(c_1(\chi^j \circ \det\mid_M))&= \kappa((q+1)ju)\\
		&=(q+1)j \kappa(u)\\
		&=0.
	\end{align*}
	This shows that $w_2(\pi\mid_M)=\res^G_M (w_2(\pi))=0$ whereas $w_2(\pi)\neq 0$. 	
\end{proof}

\section{The Case of $\GL_n(\C)$ and $\GL_n(\R)$}\label{cr}
For a Lie group $G$ we write $H^i(G)$ to denote the (singular) cohomology $H^i(BG)$ of a classifying space $BG$ of $G$. Consider the subgroup
$$\Gamma=\diag(\pm 1, \pm 1, \ldots, \pm 1)\cong C_2^n$$
of $\GL_n(k)$, where $k = \R \text{ or } \C$.
Take $T=(S^1)^n$ to be the diagonal torus of the unitary group $U_n$. Now we establish some useful detection results. 

\begin{prop}
The mod $2$ cohomology of $\GL_n(\C)$ is detected by the group $\Gamma$.
\end{prop}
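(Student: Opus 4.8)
The strategy is to pass through the maximal compact subgroup and then use the classical detection theorem of Quillen for the unitary group. First I would note that $\GL_n(\C)$ deformation retracts onto its maximal compact subgroup $U_n$, so $B\GL_n(\C) \simeq BU_n$ and hence $H^*(\GL_n(\C);\ztwo) \cong H^*(U_n;\ztwo)$. Under this identification the subgroup $\Gamma = \diag(\pm 1,\ldots,\pm 1)$ sits inside $U_n$ as well, and in fact $\Gamma \subset T$, where $T = (S^1)^n$ is the diagonal maximal torus of $U_n$. Thus the restriction map in question factors as
\[
H^*(U_n;\ztwo) \xrightarrow{\ \res\ } H^*(T;\ztwo) \xrightarrow{\ \res\ } H^*(\Gamma;\ztwo).
\]

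The second step is to recall that each of these two maps is injective. For the first, the restriction $H^*(BU_n;\ztwo) \to H^*(BT;\ztwo)$ is the classical fact that $H^*(BU_n;\ztwo) = \ztwo[c_1,\ldots,c_n]$ injects into $H^*(BT;\ztwo) = \ztwo[x_1,\ldots,x_n]$ as the symmetric polynomials (the Chern class $c_i$ going to the $i$-th elementary symmetric polynomial); this is the splitting principle. For the second, $T = (S^1)^n$ has classifying space $(\C P^\infty)^n$ with $H^*(BT;\ztwo) = \ztwo[x_1,\ldots,x_n]$ where $|x_i| = 2$, while $B\Gamma = (\R P^\infty)^n$ has $H^*(B\Gamma;\ztwo) = \ztwo[v_1,\ldots,v_n]$ with $|v_i|=1$; the inclusion $\Gamma \hookrightarrow T$ is the $n$-fold product of $C_2 \hookrightarrow S^1$, and on cohomology it sends $x_i \mapsto v_i^2$. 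Since $v_i \mapsto v_i^2$ is injective on $\ztwo[v_1,\ldots,v_n]$ (it is the Frobenius, which is injective on a polynomial ring over $\F_2$), the composite $\ztwo[x_1,\ldots,x_n] \to \ztwo[v_1,\ldots,v_n]$ is injective.

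Composing, the restriction $H^*(\GL_n(\C);\ztwo) \to H^*(\Gamma;\ztwo)$ is injective, which is exactly the assertion that $\Gamma$ detects the mod $2$ cohomology of $\GL_n(\C)$. I do not anticipate a serious obstacle here: the only point requiring a little care is making the two identifications $B\GL_n(\C)\simeq BU_n$ and $\Gamma \subset T \subset U_n$ explicit and checking that the composite restriction map really is the obvious one sending the generators to the $v_i^2$ (so that no cancellation can occur). One could alternatively phrase the whole argument via the single observation that $H^*(BU_n;\ztwo)$ is a polynomial ring on the Chern classes, which restrict to algebraically independent elements (squares of symmetric polynomials in the $v_i$) in $\ztwo[v_1,\ldots,v_n]$, giving injectivity directly; this avoids invoking the torus as an intermediate step but is essentially the same computation.
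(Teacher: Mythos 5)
Your proposal is correct and follows essentially the same route as the paper: factor the restriction through $U_n$ and its maximal torus $T$, cite injectivity of $H^*(BU_n;\ztwo) \to H^*(BT;\ztwo)$, and then reduce the final step $H^*(BT;\ztwo) \to H^*(B\Gamma;\ztwo)$ to the rank-one case $x_i \mapsto v_i^2$ via the K\"unneth/Frobenius argument. The paper phrases the rank-one step via the isomorphism $S^1 \cong \SO_2(\R)$ and its standard representation and cites Toda for the two injections, but the content is identical.
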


\begin{proof}
		
From \cite[Theorem $2.1$]{toda} we obtain that the maps $i_1^*:H^*(\GL_n(\C))\to H^*(U_n)$ and $i_2^*:H^*(U_n)\to H^*(T)$ are injective. As a result we conclude that the mod $2$ cohomology of $\GL_n(\C)$ is detected by the diagonal torus $T$.
First we prove that the mod $2$ cohomology of $S^1$ is detected by $C_2$. Note that the circle group $S^1$ is isomorphic to $\mathrm{SO}_2(\mb R)$. We know that $H^*(\mathrm{SO}_2(\mb R))=\ztwo[w_2]$, where $w_2$ denotes the second Stiefel Whitney class of standard representation of $\mathrm{SO}_2(\mb R)$. The standard representation of $\mathrm{SO}_2(\mb R)$ can be thought of as the representation $\pi=\rho\oplus\rho^{\vee}$ of $S^1$ where $\rho(z)=z$ for $z\in S^1$. So $w_2=w_2(\pi)$. One can consider $C_2=\{\pm1\}$ as a subgroup of $S^1$ to obtain a map $\res:H^*(\mathrm{SO}_2(\R))\to H^*(C_2,\ztwo)$. We have $H^*(C_2,\ztwo)=\ztwo[v]$, where $\deg v=1$.  Note that $\pi\mid_{C_2}=\sgn\oplus \sgn$. Thus we have 
$$\res w_2(\pi)=w_2(\sgn\oplus\sgn)=w_1^2(\sgn)\neq0.$$
So $\res(w_2)=v^2$. This implies that the map `$\res$' is injective. Now the result follows from Kunneth formula.

\end{proof}


%
%
%
%

\begin{prop}\label{gamma}
	The mod $2$ cohomology of $GL_n(\R)$ is detected by $\Gamma$.
\end{prop}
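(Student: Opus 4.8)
The plan is to reduce the claim to the classical structure of $H^*(BO_n(\R);\ztwo)$ together with the algebraic independence of the elementary symmetric polynomials. First I would use the fact that $\GL_n(\R)$ deformation retracts onto its maximal compact subgroup $O_n(\R)$ (polar decomposition, the space of positive definite symmetric matrices being contractible), so that the inclusion induces a homotopy equivalence $BO_n(\R)\to B\GL_n(\R)$ and hence an isomorphism $H^*(\GL_n(\R))\cong H^*(BO_n(\R))=\ztwo[w_1,\ldots,w_n]$, the polynomial ring on the universal Stiefel--Whitney classes $w_i=w_i(\rho)$ of the standard representation $\rho$ of $O_n(\R)$ (see \cite{mch}). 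The subgroup $\Gamma=\diag(\pm1,\ldots,\pm1)$ sits inside $O_n(\R)$, and the restriction map to be shown injective is $\res\colon\ztwo[w_1,\ldots,w_n]\to H^*(\Gamma)=\ztwo[v_1,\ldots,v_n]$.

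Next I would compute this map explicitly. The $i$th coordinate line of $\rho$ is invariant under $\Gamma$, and $\diag(\epsilon_1,\ldots,\epsilon_n)$ acts on it by $\epsilon_i$; thus $\rho\mid_{\Gamma}=\bigoplus_{i=1}^n\sgn_{e_i}$. Since $v_i=w_1(\sgn_{e_i})$ by definition, the Whitney product formula gives $w(\rho)\mid_{\Gamma}=\prod_{i=1}^n(1+v_i)$, so $\res$ sends $w_k$ to the $k$th elementary symmetric polynomial $e_k(v_1,\ldots,v_n)$. Because $\ztwo[w_1,\ldots,w_n]$ is generated by the $w_k$, the ring homomorphism $\res$ is completely determined by these values.

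Finally, injectivity of $\res$ amounts to the statement that $e_1(v),\ldots,e_n(v)$ are algebraically independent over $\ztwo$, equivalently that $\ztwo[v_1,\ldots,v_n]$ is a free module over $\ztwo[e_1(v),\ldots,e_n(v)]$, which is the fundamental theorem of symmetric functions (see \cite[Theorem $6.1$, page $191$]{lang}). Hence $\res$ is injective and $\Gamma$ detects the mod $2$ cohomology of $\GL_n(\R)$. I expect no serious obstacle here: the only point requiring a little care is the identification of $H^*(\GL_n(\R))$ (that is, $H^*(B\GL_n(\R))$) with $H^*(BO_n(\R))$ and of the chosen generators $w_i$ with the universal Stiefel--Whitney classes, after which everything is a short formal computation entirely parallel to the $\GL_n(\C)$ case treated above.
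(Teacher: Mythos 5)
Your proof is correct. The first step is identical to the paper's: both reduce $H^*(\GL_n(\R))$ to $H^*(O_n(\R))$ via the homotopy equivalence $BO_n(\R)\simeq B\GL_n(\R)$ (the paper cites \cite[Theorem $7.7$]{mitchell} for this; your polar-decomposition argument is the standard justification). The difference lies in the second step: the paper then simply cites \cite[Theorem $2.2$]{toda} as a black box for the injectivity of $\res\colon H^*(O_n(\R))\to H^*(\Gamma)$, whereas you prove that injectivity from scratch by observing that the standard representation restricts to $\bigoplus_i \sgn_{e_i}$, hence $\res(w_k)=e_k(v_1,\ldots,v_n)$, and then invoking the algebraic independence of elementary symmetric polynomials. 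Your route is more self-contained and elementary, making the detection transparent (it also parallels the paper's own style in the proof of Theorem \ref{detectq3}, which uses the same symmetric-function argument), while the paper's citation is shorter. Both are sound; one small caveat is that your argument tacitly uses $H^*(BO_n;\ztwo)\cong\ztwo[w_1,\ldots,w_n]$ as a polynomial ring, which is exactly the content needed for ``generators map to algebraically independent elements $\Rightarrow$ injective'' to apply, so that fact should be flagged as the one external input you rely on.
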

\begin{proof}
From \cite[Theorem $7.7$]{mitchell} we know that $B\GL_n(\R)$ is homotopy equivalent to $BO_n(\R)$. Thus the restriction map  $\res_{O}:H^*(\GL_n(\R)) \to H^*(O_n(\R))$ is an isomorphism. From \cite[Theorem 2.2]{toda} we obtain that $\res_{\Gamma}: H^*(O_n(\R)) \to H^*(\Gamma)$ is an injection. Thus the $\ztwo$ cohomology of $\GL_n(\R)$ is detected by the subgroup $\Gamma \cong C_2^n$.  
\end{proof}

\begin{proof}[Proof of Theorem \ref{RandC}]
Given a real representation $\pi$ of $\GL_n(\R) (\text{ or }\GL_n(\C))$, we calculate the total Stiefel Whitney class of $\psi =\pi\mid_\Gamma$. Moreover $\psi$ is a $S_n$ invariant representation as it is a restriction of representation of $\GL_n(\R) (\text{ or }\GL_n(\C))$. Now the Theorem \ref{RandC} follows from Theorem \ref{wpiv2n}.
\end{proof}

\begin{remark}
Let $\pi$ be a representation of $\mathrm{O}_n(\R)$. Then the proof of Proposition \ref{gamma} gives that the total SWC of $\pi$ is same as the expression in Equation \eqref{on}. 
\end{remark}

\section{Examples}\label{exam}

We have 

\[
w_2(\pi)=
\begin{cases}
	\dfrac{m_{\pi}}{2}a_2, & \text{ if } q\equiv 1 \mod 4, \\ \\
	\dbinom{m_\pi}{2}(\sum v_i^2),& \text{ if } q\equiv 3 \mod 4 ,
\end{cases}
\]	
where $m_\pi = (1/2)(\dim \pi - \chi_\pi(h_1))$. Also 
  $$w_4(\pi)=\binom{m_\pi/2}{2} \sum_i t_i^2 + \frac{n_\pi}{4} \sum_{i<j} t_it_j,$$
where $n_\pi = \dfrac{\dim \pi - \chi_{\pi}(h_2)}{2}$.

\subsection{Principal Series Representation}\label{prin}

Let $\chi_1, \chi_2,\ldots, \chi_n$ are linear characters of $\F_q^{\times}$ and let $B$ be the Borel subgroup of $\GL_n(\F_q)$ . Put 
$$\pi=\mathrm{Ind}_B^G(\chi_1\boxtimes \chi_2\boxtimes\cdots\boxtimes \chi_n).$$
Such representations are called principal series representations of $G$. Such a representation $\pi$ is orthogonal iff
$$\{\chi_1,\ldots,\chi_n\}=\{\chi_1^{-1},\ldots,\chi_n^{-1}\},$$
as multisets. We introduce the notation 

 \begin{align*}
 [n]_q! &= \prod_{i=1}^{n}\dfrac{q^i-1}{q-1}\\
 &=(q+1)(q^2+q+1)\cdots(q^{n-1}+q^{n-2}+\cdots+q+1).
 \end{align*}

From \cite[Proposition $7.5.3$, page no. $233$]{carter} we obtain  the following result.

	\begin{equation}
	\chi_{\pi}(h_k) = [k]_q![n-k]_q! \sum_{1 \leq i_1 < i_2 < \cdots <i_k \leq n}\chi_{i_1}(-1)\chi_{i_2}(-1) \cdots \chi_{i_k}(-1).
	\end{equation}

Put
$$T_i=(1+q+q^2+\cdots+q^i).$$

So we calculate
\begin{align*}
m_{\pi}&=\frac{1}{2}(\dim\pi-\chi_{\pi}(h_1))\\
&=\frac{1}{2}\left([n]_q!-[n-1]_q!\left(\sum_{i=1}^n\chi_i(-1)\right) \right)\\
&=\frac{1}{2}[n-1]_q!\left(T_{n-1}- \sum_{i=1}^n\chi_i(-1)\right).
\end{align*}

Similarly one has
\begin{align*}
n_{\pi}&=\frac{1}{2}(\dim\pi-\chi_{\pi}(h_2))\\
&=\frac{1}{2}[n-2]_q!\left(T_{n-2}T_{n-1}-(1+q)\sum_{i<j}\chi_i(-1)\chi_j(-1) \right).
\end{align*}

\begin{theorem}
Let $\pi$ be a real principal series representation for $\GL_n(\F_q)$ where $q$ is odd. Then $w_2(\pi)=0$ for $n\geq 3$.
\end{theorem}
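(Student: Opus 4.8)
The plan is to reduce the vanishing of $w_2(\pi)$ to a single $2$-adic divisibility for $m_\pi$ and then extract it from the explicit principal-series formula. By Theorem~\ref{w2ofpi}, $w_2(\pi)=\frac{m_\pi}{2}a_2$ when $q\equiv 1\pmod 4$ and $w_2(\pi)=\binom{m_\pi}{2}\,(\sum_i v_i^2)$ when $q\equiv 3\pmod 4$; since $a_2$ and $\sum_i v_i^2$ are nonzero in the relevant cohomology rings, in either case $w_2(\pi)=0$ follows once $m_\pi\equiv 0\pmod 4$. So it suffices to prove $4\mid m_\pi$ for $n\geq 3$. I would start from the identity $m_\pi=\frac{1}{2}[n-1]_q!\,(T_{n-1}-S)$ established just above, with $S=\sum_{i=1}^n\chi_i(-1)$; since $q$ is odd, both $T_{n-1}=\sum_{j=0}^{n-1}q^j$ and $S$ are congruent to $n$ modulo $2$, so $T_{n-1}-S$ is even, and (assuming $m_\pi\neq 0$, the case $m_\pi=0$ being trivial) $\operatorname{ord}_2(m_\pi)=\operatorname{ord}_2([n-1]_q!)+\operatorname{ord}_2(T_{n-1}-S)-1$, where $\operatorname{ord}_2$ denotes the $2$-adic valuation.

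The next observation is that for $n\geq 3$ the product $[n-1]_q!=\prod_{i=1}^{n-1}\frac{q^i-1}{q-1}$ contains the factor $\frac{q^2-1}{q-1}=q+1$, which is even (as $q$ is odd) and is divisible by $4$ precisely when $q\equiv 3\pmod 4$. Thus $\operatorname{ord}_2([n-1]_q!)\geq 1$ always, and $\operatorname{ord}_2([n-1]_q!)\geq 2$ when $q\equiv 3\pmod 4$. This already closes the case $q\equiv 3\pmod 4$, where $\operatorname{ord}_2(m_\pi)\geq 2+1-1=2$.

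For $q\equiv 1\pmod 4$ one needs in addition that $4\mid T_{n-1}-S$. Modulo $4$ each $q^j\equiv 1$, so $T_{n-1}\equiv n$. For $S$, use that $\pi$ is real, hence orthogonal, so by the criterion recalled before the theorem $\{\chi_1,\dots,\chi_n\}=\{\chi_1^{-1},\dots,\chi_n^{-1}\}$ as multisets; consequently this multiset is a disjoint union of $k$ two-element orbits $\{\chi,\chi^{-1}\}$ with $\chi^2\neq 1$ and of $l$ self-dual characters with $\chi^2=1$, where $2k+l=n$. A linear character of $\F_q^{\times}$ of order at most $2$ is trivial or quadratic, and since $q\equiv 1\pmod 4$ the element $-1$ is a square in $\F_q^{\times}$, so $\chi(-1)=1$ in both cases; a two-element orbit contributes $\chi(-1)+\chi^{-1}(-1)=2\chi(-1)\equiv 2\pmod 4$. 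Hence $S\equiv 2k+l=n\pmod 4$, so $T_{n-1}-S\equiv 0\pmod 4$ and $\operatorname{ord}_2(m_\pi)\geq 1+2-1=2$. In all cases $m_\pi\equiv 0\pmod 4$, so $w_2(\pi)=0$ for $n\geq 3$. The delicate point is the computation of $S$ modulo $4$, which rests on knowing the characters of $\F_q^{\times}$ of order at most $2$ and on the fact that $-1$ is a square in $\F_q^{\times}$ exactly when $q\equiv 1\pmod 4$; that is also why for $q\equiv 3\pmod 4$ one instead exploits the divisibility $4\mid q+1$. No case analysis in $n$ is needed, since the two valuation bounds already force $\operatorname{ord}_2(m_\pi)\geq 2$ as soon as $n\geq 3$.
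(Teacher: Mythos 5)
Your proof is correct, and for half of the statement it takes a genuinely different route from the paper. For $q\equiv 3\pmod 4$ both arguments are essentially identical: extract $\mathrm{ord}_2(q+1)\geq 2$ from $[n-1]_q!$, note $T_{n-1}-\sum\chi_i(-1)$ is even by parity, and conclude $\mathrm{ord}_2(m_\pi)\geq 2$. The real difference is in the case $q\equiv 1\pmod 4$. The paper invokes an external result (Theorem 5 of the cited reference \cite{spjoshi}): real principal series representations of $\GL_n(\F_q)$ with $n\geq 3$ are spinorial, hence $w_2(\pi)=w_1(\pi)^2$, and since $w_1(\pi)=\delta b_1$ with $b_1^2=0$ in the exterior part of $H^*(\GL_n(\F_q))$, this forces $w_2(\pi)=0$. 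You instead show $m_\pi\equiv 0\pmod 4$ directly: you split the orthogonal multiset $\{\chi_1,\dots,\chi_n\}$ into inversion-orbits, use that each pair $\{\chi,\chi^{-1}\}$ contributes $2\chi(-1)\equiv 2\pmod 4$ and each self-dual character contributes $1$ (since $-1$ is a square mod $q$ for $q\equiv 1\pmod 4$, so both the trivial and quadratic characters send $-1$ to $1$), arriving at $S\equiv n\equiv T_{n-1}\pmod 4$, and combine this with $\mathrm{ord}_2([n-1]_q!)\geq 1$. Your argument is therefore more self-contained and in fact reproves (the $w_2$ content of) the cited spinoriality result from first principles, while the paper's proof is shorter given that the spinoriality theorem is available. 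Both are valid.
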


\begin{proof}
From \cite[Thorem $5$]{spjoshi} we know that if $n\geq 3$, then all the real principal series representations of $\GL_n(\F_q)$ are spinorial. Now $\pi$ is spinorial iff $w_2(\pi)=w_1^2(\pi)$ (see \cite[Proposition $6.1$]{jyoti}). If $q\equiv 1\pmod 4$,then $w_1^2(\pi)=0$. Therefore $w_2(\pi)=0$.

Now consider the case for $q\equiv 3\pmod 4$.
We have
$$m_{\pi}=\frac{1}{2}[n-1]_q!\left(T_{n-1}-\sum_{i=1}^n\chi_{i}(-1)\right).$$
Also
$$T_{n-1}\equiv \sum_{i=1}^n\chi_{i}(-1)\equiv n\pmod 2.$$
This gives $T_{n-1}-\sum\limits_{i=1}^n\chi_{i}(-1)\equiv 0\pmod 2$.
We have $[m]_q! \equiv 0 \pmod 4 $ for $m\geq 2$. For $n\geq 3$, we have $\upsilon_2(m_{\pi})\geq 2$. Therefore
$\dbinom{m_{\pi}}{2}\equiv 0\pmod 2$.
Therefore $w_2(\pi)=0$ for $n\geq 3$.

\end{proof}

\begin{proof}[Proof of Theorem \ref{psr}]

We have
\begin{align*}
n_{\pi}&=\frac{1}{2}[n-2]_q!\left(T_{n-2}T_{n-1}-(1+q)\sum_{i<j}\chi_i(-1)\chi_j(-1) \right).
\end{align*}
Observe that $T_{n-2}T_{n-1}-(1+q)\sum_{i<j}\chi_i(-1)\chi_j(-1)$ is always even. Also $\upsilon_2([n-2]_q!)\geq 3$ for $n\geq 6$. Therefore $n_{\pi}/4$ remains even for $n\geq 6$.

We have
$$m_{\pi}=\frac{1}{2}[n-1]_q!\left(T_{n-1}-\sum_{i=1}^n\chi_{i}(-1)\right).$$
Also 
$$T_{n-1}\equiv \sum_{i=1}^n\chi_{i}(-1)\equiv n\pmod 2.$$
This gives $T_{n-1}-\sum\limits_{i=1}^n\chi_{i}(-1)\equiv 0\pmod 2$. For $n\geq 6$, the term $m_{\pi}/2$ remains even. So we have $\dbinom{m_{\pi}/2}{2}\equiv \dfrac{m_{\pi}}{4}\pmod 2$. Now $\upsilon_2(m_{\pi})\geq 4$ for $n\geq 6$. Therefore $w_4(\pi)=0$ for $n\geq 6$.

For $n=5$ the two possible real Principal series representations are $\pi_1=\{\mathbb{1}, \chi_i,\chi_i^{-1},\chi_j,\chi_j^{-1}\}$ and $\pi_2=\{\sgn, \chi_i,\chi_i^{-1},\chi_j,\chi_j^{-1}\}$.

We calculate
\begin{align*}
m_{\pi_1}&=\frac{1}{2}\left([5]_q!-[4]_q!(1+2(-1)^i+2(-1)^j) \right)\\
&=\frac{1}{2}(q+1)^2(q^2+1)(q^2+q+1)\left(q(q+1)(q^2+1)-2(-1)^i-2(-1)^j \right).
\end{align*}
This shows $\upsilon_2(m_{\pi_1})\geq 4$. Similarly one obtains $\upsilon_2(m_{\pi_2})\geq 4$.

We have
\begin{align*}
n_{\pi_1}&=\frac{1}{2}\left([5]_q!-[2]_q![3]_q!(2+2(-1)^i+2(-1)^j+4(-1)^{i+j})  \right)\\
&=\frac{1}{2}(q+1)^2(q^2+q+1)\left((q^2+1)(q^4+q^3+q^2+q+1)-(2+2(-1)^i+2(-1)^j+4(-1)^{i+j}) \right)\\
&=\frac{1}{2}(q+1)^2(q^2+q+1)\left(\{q^6+q^5+2(q^4+q^3+q^2)\}+\{q-1\}-\{2(-1)^i+2(-1)^j)+4(-1)^{i+j}\} \right).
\end{align*}
Note that the all the terms in the curly brackets and $(q+1)^2$ are divisible by $4$. Therefore $\upsilon_2(n_{\pi_1})\geq 3$.
Similar calculation shows that $\upsilon_2(n_{\pi_2})\geq 3$. Now the result follows from Theorem \ref{w4}.
\end{proof}

Here we list down the value of $w_4(\pi)$, where $\pi$ is an orthogonal principal series representation of $\GL_n(\F_q)$ for $q\equiv 1\pmod 4$.

\begin{enumerate}
\item
Take $n=3$ and $\pi=\{\mathbb{1},\chi_j,\chi_j^{-1}\}$ or $\{\sgn,\chi_j,\chi_j^{-1}\}$ Then
$$w_4(\pi)=\sum_{i=1}^3 t_i^2+\sum_{1\leq i<j\leq 3} t_it_j$$ 
if $q\equiv 1,9\pmod {16}$ and $j$ is odd or $q\equiv 5,13\pmod{16}$ and $j$ is even. Otherwise $w_4(\pi)=0$.
\item
Take $n=4$ and $\pi=\{\mathbb{1},\sgn,\chi_i,\chi_i^{-1}\}$. Then
\[
w_4(\pi)=
\begin{cases}
\sum\limits_{i=1}^4 t_i^2,& \text{if $i$ is odd},\\
0, & \text{if $i$ is even}.
\end{cases}
\]
\item
Take $n=4$ and $\pi=\{\chi_j,\chi_j^{-1},\chi_k,\chi_k^{-1}\}$.
\[
w_4(\pi)=
\begin{cases}
\sum\limits_{i=1}^4 t_i^2,&\text{if $j$ and $k$ have different parity},\\
0, & \text{otherwise}.
\end{cases}
\]

\end{enumerate}

%
%

\subsection{Cuspidal Representations}

Let $\theta$ be a regular character of full anisotropic torus $T$ of $\GL_n(\F_q)$. Then there is a cuspidal representation $\pi_{\theta}$ associated to it. The representation $\pi_{\theta}$ is real iff there exists a Weyl group element $w$ such that $w(\theta)=\theta^{-1}$.
 
Let $\pi$ be a real Cuspidal representation of $\GL_n(\F_q)$.
\subsubsection{\bf{Computation of $w_2(\pi)$}}

We know that $\chi_\pi(h_1) = 0$ and $\dim \pi = \psi_{n-1}(q)$ where $$ \psi_m(q)=\prod_{i=1}^{m}(q^i-1). $$ Thus $m_\pi = \psi_{n-1}(q)/2$.

\begin{theorem}
Let $\pi$ be a real Cuspidal representation of $\GL_n(\F_q)$ such that $n\geq 3$. Then $w_2(\pi)=0$.	
\end{theorem}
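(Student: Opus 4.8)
The plan is to reduce the statement to an elementary $2$-adic estimate and then invoke Theorem~\ref{w2ofpi}. By the facts recorded immediately before the statement, a real cuspidal representation $\pi$ of $\GL_n(\F_q)$ has $\chi_\pi(h_1)=0$ and $\dim\pi=\psi_{n-1}(q)=\prod_{i=1}^{n-1}(q^i-1)$, so $m_\pi=\tfrac12\psi_{n-1}(q)$. Theorem~\ref{w2ofpi} gives $w_2(\pi)=\tfrac{m_\pi}{2}a_2$ when $q\equiv1\pmod4$ and $w_2(\pi)=\binom{m_\pi}{2}\bigl(\sum_i v_i^2\bigr)$ when $q\equiv3\pmod4$. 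Since $a_2$ and $\sum_i v_i^2$ are nonzero in the respective cohomology rings, it therefore suffices to prove that $m_\pi\equiv0\pmod4$: this makes $\tfrac{m_\pi}{2}$ even, and it also makes $\binom{m_\pi}{2}$ even.

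So the only thing I actually need is $\upsilon_2(\psi_{n-1}(q))\ge 3$, i.e.\ $\upsilon_2(m_\pi)\ge 2$. First I would note that, $q$ being odd, exactly one of $q-1$, $q+1$ is divisible by $4$, hence $\upsilon_2(q^2-1)=\upsilon_2(q-1)+\upsilon_2(q+1)\ge 3$. Because $n\ge 3$, the product $\psi_{n-1}(q)$ contains both the factor $q-1$ and the factor $q^2-1$, so $\upsilon_2(\psi_{n-1}(q))\ge \upsilon_2(q-1)+\upsilon_2(q^2-1)$.

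Now I split into the two congruence classes. If $q\equiv1\pmod4$ then $\upsilon_2(q-1)\ge 2$, giving $\upsilon_2(\psi_{n-1}(q))\ge 5$ and $\upsilon_2(m_\pi)\ge 4$. If $q\equiv3\pmod4$ then $\upsilon_2(q-1)=1$, giving $\upsilon_2(\psi_{n-1}(q))\ge 4$ and $\upsilon_2(m_\pi)\ge 3$. In both cases $m_\pi\equiv0\pmod4$, so by the first paragraph $w_2(\pi)=0$. (Alternatively: $m_\pi\equiv0\pmod4$ already makes $\pi$ spinorial by Theorem~\ref{lift}, and since $w_1(\pi)$ is either an exterior class with vanishing square or equals $m_\pi\sum_i v_i=0$ by Theorem~\ref{w1ofpi}, one obtains $w_2(\pi)=w_1^2(\pi)=0$ that way too.)

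I do not expect a genuine obstacle here. The only place requiring care is the boundary case $n=3$, where $\psi_{n-1}(q)=(q-1)(q^2-1)$ and the valuation bounds have the least slack, but the estimates above already cover it; it is worth spot-checking small values ($q=3$ gives $m_\pi=8$, $q=5$ gives $m_\pi=48$) to confirm that $m_\pi\equiv0\pmod4$ holds as claimed.
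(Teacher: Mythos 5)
Your proof is correct and takes essentially the same route as the paper's: both reduce to showing $m_\pi \equiv 0 \pmod 4$, and both obtain this by observing that for $n \geq 3$ the factors $(q-1)$ and $(q^2-1)$ both appear in $\psi_{n-1}(q)$, so that $\tfrac{(q-1)(q^2-1)}{2}$ divides $m_\pi$. The paper states this in one line; you unwind the $2$-adic valuation explicitly by congruence class, but the underlying argument is identical.
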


\begin{proof}
For $n\geq 3$, the term $\dfrac{(q-1)(q^2-1)}{2}$ divides $m_\pi$, Thus $m_\pi \equiv 0 \mod 4$, and therefore $w_2(\pi) = 0$.
\end{proof}

For $n =2$ and $q \equiv 1 \mod{4}$, we have $m_\pi = (q-1)/2$ and $w_2= \frac{q-1}{4} a_2$.
\[ w_2(\pi) = 
\begin{cases}
	0 ,  &\text{ if } q \equiv 1 \mod 8,\\
	a_2,   &\text{ if } q \equiv 5 \mod 8. \\
\end{cases}
\]

For $n =2$ and $q \equiv 3 \mod 4$, we have $m_\pi = (q-1)/2$ and $w_2=\binom{m_\pi}{2}\cdot  ( v_1^2 + v_2^2)$.
Therefore 
\[ w_2(\pi) = 
\begin{cases}
	0 ,  &\text{ if } q \equiv 3 \mod 8,\\
	v_1^2 + v_2^2 ,  &\text{ if } q \equiv 7 \mod 8. \\
\end{cases}
\]
\subsubsection{\bf{Computation of $w_4(\pi)$}}

We summerize the computations in the following table.
%


	\begin{center}
	\begin{table}[ht]
		\caption{Computation of $w_4(\pi)$ where $\pi$ is the Cuspidal representation of $\GL_n(\mb F_q)$}
		\centering	
		
		\tabcolsep=.6cm
		\renewcommand{\arraystretch}{2}
		\begin{tabular}{|p{1.5cm}|p{1.5cm}|p{1.5cm}|p{1.5cm}|}
			\hline  
			$q \mod 16$ & $\theta(-1)$ & $\binom{m_\pi/2}{2} \pmod 2$&$n_\pi/4 \pmod 2$\\
			\hline 
			$1$& 1 & 0 &0\\
			\hline  
			$1$ &
			$-1$ & 0 &0\\  
			\hline
			$5 $ & $ 1$ & 0 &0\\
			\hline
			$ 5$& $ -1$ & 0 &1\\
			\hline 
			$ 9$& $ 1$ &1&0\\
			\hline 
			$ 9$ & $-1$ & 1&0\\
			\hline 
			$ 13$ & $1$ & 1 &0\\
			\hline 
			$ 13$ & $ -1$ &1&1\\
			\hline 
		\end{tabular}

		\label{cusp.table2}
	\end{table}
\end{center}

\subsection{Steinberg  Representation}
Let $\pi$ be the Steinberg representation of $\GL_n(\F_q)$.

\subsubsection{\bf{Computation of $w_2(\pi)$}}

%
%
%

We have $\chi_\pi(h_1)=p^k$ where $p^k$ is the highest power of $p$ dividing $|Z_G(h_1)|$ (see \cite[Theorem $6.4.7$, page $195$]{carter}). We have $Z_G(h_1) \cong \GL_1(\F_q) \times \GL_{n-1}(\F_q) $. Thus $\chi_\pi(h_1) = q^{(1/2)(n-1)(n-2)}$.
Therefore 
\begin{align*}
	m_\pi &= \dfrac{q^{(1/2)(n(n-1))} - q^{(1/2)(n-1)(n-2)}}{2}\\
	&=(1/2)\cdot q^{(1/2)(n-1)(n-2)}(q^{n-1}-1)\\
	&=\dfrac{q-1}{2} \cdot q^{(1/2)(n-1)(n-2)} \cdot (1+q+q^2+ \cdots + q^{n-2}).
\end{align*}

For $ q \equiv 1 \pmod 4$ we have


\[w_2(\pi)=
\begin{cases}
	a_2, &\text{ if } q \equiv 5 \mod 8 \text{ and for } n \text{ even },\\
	0, &\text{  otherwise }.
\end{cases}
\]

For $q \equiv 3 \pmod{4}$ we have  


\[ w_2(\pi)=
\begin{cases}
	\sum\limits_{i=1}^n v_i^2,  &\text{ if } q \equiv 3 \mod 8 \text{ and for } n \equiv 0 \pmod 4,\\
	\sum\limits_{i=1}^n v_i^2,  &\text{ if } q \equiv 7 \mod 8 \text{ and for } n \equiv 2 \pmod 4,\\
	0, &\text{  otherwise }.
\end{cases}
\]

\subsubsection{\bf{Computation of $w_4(\pi)$}}

We summerize the computations in the following table.

\begin{center}
	\begin{table}[ht]
		\caption{ Computation of $w_4(\pi)$ where $\pi$ is the Steinberg representation of $\GL_n(\mb F_q)$}
		\centering	
		
		\tabcolsep=.6cm
		\renewcommand{\arraystretch}{2}
		\begin{tabular}{|p{1.5cm}|p{1.5cm}|p{1.5cm}|p{1.5cm}|}
			\hline  
			$q \mod 8$ & $n \mod 4$ & $\binom{m_\pi/2}{2} \mod 2$&$n_\pi/4 \mod 2$\\
			\hline 
			$1$& $0,1,2,3$ & 0 &0\\
			\hline  
			$5$ &
			$0$ & 1 &0\\  
			\hline
			$ 5$ & $ 1$ & 0 &1\\
			\hline
			$ 5$& $ 2$ & 0 &0\\
			\hline
			$ 5$& $ 3$ &1&1\\
			\hline 
			$ 9$ & $1,3$ & 0&0\\
			\hline $ 9$ & $0,2$ & 1 &0\\
			\hline 
			$ 13$ & $ 0$ &0&0\\
			\hline
			$ 13$ & $ 1$ &0&1\\
			\hline
			$ 13$ & $ 2$ &1&0\\
			\hline
			$13$ & $3$ &1&1\\
			\hline 
		\end{tabular}
		\label{Stein.table2}
	\end{table}
\end{center}
\newpage

\section{Data Availability Statement}

Our manuscript has no associated data.

\section{Conflict of Interest Statement}

All authors declare that they have no conflicts of interest.

\bibliographystyle{alpha}
\bibliography{mybib}

\end{document}